\title{Combinatorial Ricci curvature on cell-complex and Gauss-Bonnnet Theorem}
\author{Kazuyoshi Watanabe}
\date{}
\theoremstyle{plain} 
\newtheorem{theorem}{\indent\sc Theorem}[section]
\newtheorem{lemma}[theorem]{\indent\sc lemma}
\newtheorem{corollary}[theorem]{\indent\sc corollary}
\newtheorem{proposition}[theorem]{\indent\sc proposition}
\theoremstyle{definition} 
\newtheorem{definition}[theorem]{\indent\sc Definition}
\def\address#1#2{\begingroup
\noindent\parbox[t]{7.8cm}{%
\small{\scshape\ignorespaces#1}\par\vskip1ex
\noindent\small{\itshape E-mail address}%
\/: #2\par\vskip4ex}\hfill%
\endgroup}%
\subjclass[2010]{Primary~05E45, Secondary~53B21}
\begin{document}

\maketitle
\begin{abstract}
In this paper, we introduce a new definition of the Ricci curvature on cell-complexes and prove the Gauss-Bonnnet type theorem for graphs and 2-complexes that decompose closed surfaces. The defferential forms  on a cell complex is defined as linear maps on the chain complex, and the Laplacian operates this defferential forms. Our the Ricci curvature is defined by the combinatorial Bochner-Weitzenb\"{o}ck formula. We prove some propositionerties of combinatorial vector fields on a cell complex.
\end{abstract}
\section{Introduction}
In this paper, we  introduce a new definition of the Ricci curvature on cell-complexes and prove the Gauss-Bonnnet type theorem for graphs and 2-complexes that decompose closed surfaces. In the Riemanian geometry, the curvature plays an important role, and there are many results on the curvature on smooth manifolds. Especially the Gauss-Bonnet theorem is known as a fundamental propositionerty of a smooth closed manifold. The curvature on a cell complex was stuied in many ways. R. Forman defined the Ricci curvature on a cell complex with the Bochner-Weitzenb\"{o}ck formula on the cochain. With this curvature he also showed the Bochner's theorem, the Myers' theorem and so on. For the curvature defined by angles, Mccorollarymick Paul \cite{paul} established the Gauss-Bonnet theorem, but in the Forman's way the Gauss-Bonnet theorem does not hold.

R. Forman established the discrete Morse theory in \cite{forman-morse}. He studied the function on a cell complex and the relation between critical cells and the homology of the cell complex. He also extended this thoery to the discrete Novikov-Morse theory, and in this theory he defined a defferential form on the cell complex. This defferential form is not the cochain of the cell complex but a linear map on the chain of the cell complex. In this paper we use this defferential forms to define the Ricci curvature. We introduce the $L^2$ inner product on the space of combinatorial defferential forms, and this inner product determines the Laplacian on combinatorial defferential forms. Then the Ricci curvature is definied with the combinatorial Bochner-Weitzenb\"{o}ck formula for the combinatorial defferential forms.

For the construction of the Bochner-Weitzenb\"{o}ck formula, we need the covariant of a 1-form. For this definition we present the 0- and 2-neighbor vector. These vectors is roughly said ``the pararell vectors''. We define the covariant of a 1-form as the defference between the components of pararell vectors. Then for the cell complex with constant weights, the Ricci curvature is calculated as combinatorial computation,
\begin{eqnarray*}
\operatorname{Ric}(\omega) (\tau>\sigma) = (2- \# \{ {\rm 0 - neighbor~vector~of~} (\tau>\sigma) \}) (\omega ^\tau _\sigma)^2.
\end{eqnarray*}
This formula means that the Ricci curvature on the cell complex at the cell $\sigma$ is determined by the constructure around $\sigma$. For a graph or a 2-dimensional complex that decomposes a closed surface, the Ricci curvature for a unit vector at a vertex (resp. at a face $f$) is independent of the choice of the unit vector, and we define this value as the Gauss curvature $g_v$ (resp. $g_f$) at the vertex $v$ (resp. the face $f$). We have the following Gauss-Bonnet type theorems.
\begin{theorem}\label{main1}
Let $G$ be a finite simple graph. Then we have
\begin{eqnarray}
\sum_v g_v = 2\chi(G),
\end{eqnarray}
where the sum is taken over all vertexes $v$ and $\chi(G)$ is the Euler number of $G$.
\end{theorem}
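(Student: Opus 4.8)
The plan is to reduce the identity to the handshake lemma by feeding the structure of a graph into the curvature formula displayed in the introduction. Since $G$ is a simple graph it has only $0$-cells (vertices) and $1$-cells (edges), so the only flags $(\tau>\sigma)$ at which $\operatorname{Ric}$ is defined are the pairs $(e>v)$ with $v$ an endpoint of $e$; moreover there are no $2$-cells, hence no $2$-neighbor vectors of any such flag, and the combinatorial Bochner--Weitzenb\"{o}ck formula contributes only through the $0$-neighbor vectors. In particular the specialization to constant weights,
\begin{eqnarray*}
\operatorname{Ric}(\omega)(e>v) = \bigl(2 - \#\{\,0\text{-neighbor vectors of }(e>v)\,\}\bigr)(\omega^e_v)^2,
\end{eqnarray*}
applies directly.

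First I would compute the number of $0$-neighbor vectors of a flag $(e>v)$. By the definition of $0$-neighbor vector from the previous section, such a vector is obtained by keeping the vertex $v$ fixed and varying the incident edge, so the $0$-neighbor vectors of $(e>v)$ are in natural bijection with the edges incident to $v$; as $G$ is simple this count equals $\deg(v)$. Substituting into the formula above, for a unit vector $\omega$ at $v$ we get $\operatorname{Ric}(\omega)(e>v) = 2-\deg(v)$, a value that does not depend on the chosen edge $e$ at $v$. This is exactly the well-definedness asserted in the introduction, and it gives $g_v = 2 - \deg(v)$.

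Then I would simply sum over the vertex set. Writing $|V|$ and $|E|$ for the numbers of vertices and edges of $G$ and using $\sum_v \deg(v) = 2|E|$ (the handshake lemma),
\begin{eqnarray*}
\sum_v g_v = \sum_v \bigl(2-\deg(v)\bigr) = 2|V| - 2|E| = 2\bigl(|V|-|E|\bigr) = 2\chi(G),
\end{eqnarray*}
which is the assertion of the theorem.

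The main obstacle I anticipate lies entirely in the second step: one must verify from the definition of $0$-neighbor vectors that the count is $\deg(v)$ and not $\deg(v)-1$ --- equivalently, that the flag is to be counted among its own $0$-neighbors, or that the bijection with the edges at $v$ is the intended bookkeeping --- and one must use simplicity of $G$ (no loops, no multiple edges) for this bijection to be clean. Once the count $\deg(v)$ is pinned down, the remainder is just the formula from the introduction together with the elementary handshake identity, so no further estimates are needed.
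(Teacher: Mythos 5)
Your overall route is exactly the paper's: show that $(e>v)$ has no $2$-neighbor vectors and exactly $\deg(v)$ $0$-neighbor vectors, conclude $g_v=2-\deg(v)$ from the formula $\operatorname{Ric}(\omega)(\tau>\sigma)=(2-\#\{0\text{-neighbors}\})(\omega^\tau_\sigma)^2$, and finish with the handshake lemma. The final summation and the observation that the absence of $2$-cells kills all $2$-neighbor terms are both correct and match the paper.

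However, the one substantive step --- the count of $0$-neighbor vectors --- is justified incorrectly, and you yourself flag it as unresolved (``$\deg(v)$ or $\deg(v)-1$?''). Your proposed bijection ``keep $v$ fixed and vary the incident edge'' cannot be the right bookkeeping: the definition of a $0$-neighbor vector of $(\tau>\sigma)$ of the first type requires $\tau'\neq\tau$, so $(e>v)$ is \emph{not} its own $0$-neighbor, and varying the edge yields only the $\deg(v)-1$ vectors $(e'>v)$ with $e'\neq e$ (all of which qualify because $G$ has no $2$-cells). The missing contribution comes from the \emph{second} bullet of the definition, which your bookkeeping omits entirely: the vectors $(\tau>\sigma')$ with $\sigma'\neq\sigma$ and no common face below $\sigma,\sigma'$. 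For a graph this is the single vector $(e>v')$ where $v'$ is the other endpoint of $e$; it is a $0$-neighbor because there is no $(-1)$-cell under both $v$ and $v'$. This is how the paper arrives at $(\deg(v)-1)+1=\deg(v)$. Once you replace your bijection by this two-case count, the rest of your argument goes through verbatim. (A minor further slip: with the normalization $\sum_{e>v}(\omega^e_v)^2=1$ one gets $\operatorname{Ric}(\omega)(e>v)=(2-\deg(v))(\omega^e_v)^2$ for each individual $e$, and it is the \emph{sum} over $e>v$ that equals $2-\deg(v)=g_v$, as in the paper's lemma preceding the definition of $g_v$.)
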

\begin{theorem}\label{main2}
Let $M$ be a 2-dimensional quasiconvex cell complex that decomposes a 2-dimensional closed smooth surface. Then we have
\begin{eqnarray}
\sum_v g_v + \sum_f  g_f= 4\chi(M),
\end{eqnarray}
where the sums are taken over all vertexes $v$ and all faces $f$ respectivity, and $\chi(M)$ is the Euler number of $M$.
\end{theorem}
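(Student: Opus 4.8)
The plan is to reduce the identity to a local computation of the Gauss curvatures $g_v,g_f$ and then a global count of cells through the Euler characteristic, in parallel with the proof of Theorem \ref{main1} but now carrying the $2$-cells along. First I would record what the hypotheses give locally. Because $M$ is quasiconvex and decomposes a closed surface, the link of each vertex $v$ is a single circle, so $v$ lies on exactly $d_v$ edges and $d_v$ faces arranged in a wheel, no edge is a loop at $v$, and no face meets $v$ in two corners; dually, the boundary of each face $f$ is a single cycle, so $f$ has $n_f$ edges and $n_f$ vertices, every edge borders exactly two faces, and no face is glued to itself. This regularity is exactly what makes the incidence counts below unambiguous.

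Next I would evaluate the combinatorial Ricci formula on unit vectors. A unit tangent vector at $v$ is a pair $(e>v)$ with $e\ni v$ and $(\omega^e_v)^2=1$; from the wheel around $v$ its $0$-neighbor vectors are the half-edges $(e'>v)$ along edges $e'$ at $v$ sharing no face with $e$, together with the opposite half-edge $(e>v')$, for a total of $(d_v-3)+1=d_v-2$. Hence $g_v=\operatorname{Ric}(\omega)(e>v)=2-(d_v-2)=4-d_v$, and in particular this does not depend on the chosen unit vector. The identical count for the $n_f$-gon $f$ gives $n_f-2$ zero-neighbors of a unit vector $(f>e)$ at $f$, so $g_f=4-n_f$. (With the $2$-cells removed the count would collapse to $d_v$, recovering $g_v=2-d_v$ and hence Theorem \ref{main1}.) Summing and using the handshake identities $\sum_v d_v=2E$ and $\sum_f n_f=2E$ for a closed surface,
\[
\sum_v g_v+\sum_f g_f=\sum_v(4-d_v)+\sum_f(4-n_f)=(4V-2E)+(4F-2E)=4(V-E+F)=4\chi(M).
\]

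I expect the main obstacle to be the $0$-neighbor enumeration in the second step. One has to be sure that under quasiconvexity the various ``parallel'' vectors around a vertex (and around a face) are genuinely distinct and counted once each, and that none of the degenerate incidences excluded in the first step — a loop edge, a face meeting a vertex twice, an edge lying on fewer than two faces — can sneak back in; pinning down the quasiconvexity hypothesis so that these cases are impossible is the real work, after which the computation above is just bookkeeping.
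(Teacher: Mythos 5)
Your proposal is correct and follows essentially the same route as the paper: you count the $0$-neighbor vectors of $(e>v)$ and $(f>e)$ to get $\deg-2$ in each case, conclude $g_v=4-\deg(v)$ and $g_f=4-\deg(f)$, and finish with the handshake identities $\sum_v\deg(v)=\sum_f\deg(f)=2E$ and the Euler characteristic. The only cosmetic difference is that the paper packages the local computation through the scalar curvature (the trace of the quadratic form $\sum_{e>v}\operatorname{Ric}(\omega)(e>v)$) before dividing by the degree, whereas you evaluate on a single unit vector directly; the resulting values coincide.
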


We also present the vector field on a cell complex. This is defined as a dual of a combinatorial defferential 1-form and we prove some propositionerties of this vector field.

\section{Combinatorial Defferential Forms}
\subsection{Definition of Combinatorial Defferential Forms}
In this section, we present a defferential form on a cell-complex intoruduced in \cite{forman-novikov}. Let $M$ be a regular cell complex of dimension $n$, and
\begin{eqnarray}
\begin{CD}
0 @>>> C_n(M) @>\partial>> C_{n-1}(M) @>\partial>> \cdots @>\partial>>C_0(M) @>>> 0 \nonumber
\end{CD}
\end{eqnarray}
be the real cellular chain complex of $M$. We set
\begin{eqnarray}
C_* (M) =\bigoplus_p C_p (M).
\end{eqnarray}
A linear map $\omega : C_*(M) \rightarrow C_*(M)$ is said to be of {\it degree} $d$ if for all $p=1,...,n$,
\begin{eqnarray}
\omega (C_p(M)) \subset C_{p-d} (M).
\end{eqnarray} 
We say that a linear map $\omega$ of degree $d$ is $local$ if, for each $p$ and each oriented $p$-cell $\alpha$, $\omega(\alpha)$ is a linear combination of oriented ($p-d$)-cells that are faces of $\alpha$.
\begin{definition}
For $d\geq0$, we say that a local linear map $\omega : C_*(M) \rightarrow C_*(M)$ of degree $d$ is {\it a conbinatorial defferential $d$-form}, and we denote the space of combinatorial differential $d$-forms by $\Omega ^d (M)$.
\end{definition}
We define {\it the differential of combinatorial defferential forms}
\begin{eqnarray}
d:\Omega^d(M) \rightarrow \Omega^{d+1}(M)
\end{eqnarray}
as follows. For any $\omega \in \Omega^d(M)$ and any $p$-chain $c$, we define $(d\omega) (c) \in C_{p-(d+1)} (M)$ by
\begin{eqnarray}
(d\omega)(c) = \partial (\omega(c)) - (-1)^d \omega (\partial c).
\end{eqnarray}
That is,
\begin{eqnarray}
d\omega = \partial \circ \omega - (-1)^d \omega \circ \partial.
\end{eqnarray}

\begin{lemma}[\cite{forman-novikov}]
The differential for combinatorial defferential forms satisfies the following propositionerties.
\begin{itemize}
\item $d(\Omega^d(M)) \subseteq \Omega^{(d+1)}(M)$.\\
\item $d^2=0$.
\end{itemize}
\end{lemma}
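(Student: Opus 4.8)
The plan is to verify both assertions straight from the defining formula $d\omega=\partial\circ\omega-(-1)^d\,\omega\circ\partial$. Since $\partial$ and $\omega$ are linear, $d\omega$ is automatically a linear endomorphism of $C_*(M)$, so the content of the first bullet is only that $d\omega$ has degree $d+1$ and is local. For the degree, from $\omega(C_p(M))\subseteq C_{p-d}(M)$ I get $\partial(\omega(C_p(M)))\subseteq C_{p-d-1}(M)$ and $\omega(\partial C_p(M))\subseteq\omega(C_{p-1}(M))\subseteq C_{p-1-d}(M)$, so both summands of $d\omega$ lower dimension by exactly $d+1$.

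For locality I would fix an oriented $p$-cell $\alpha$ and expand $(d\omega)(\alpha)=\partial(\omega(\alpha))-(-1)^d\,\omega(\partial\alpha)$. Locality of $\omega$ writes $\omega(\alpha)$ as a combination of oriented $(p-d)$-cells $\beta$ that are faces of $\alpha$, and $\partial\alpha$ as a combination of oriented $(p-1)$-cells $\gamma$ that are faces of $\alpha$; then $\partial\beta$ is supported on $(p-d-1)$-faces of $\beta$, and, again by locality, $\omega(\gamma)$ is supported on $(p-d-1)$-faces of $\gamma$. Invoking transitivity of the face relation in a regular cell complex, every cell that appears is a face of $\alpha$, so $(d\omega)(\alpha)$ is a linear combination of oriented $(p-d-1)$-cells that are faces of $\alpha$. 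Hence $d\omega\in\Omega^{d+1}(M)$, which is the first bullet.

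For $d^2=0$ I would apply the formula twice, using that $d\omega\in\Omega^{d+1}(M)$ so the second application carries the sign $(-1)^{d+1}$:
\begin{align*}
d(d\omega)&=\partial\circ(d\omega)-(-1)^{d+1}(d\omega)\circ\partial\\
&=\partial^2\omega-(-1)^d\,\partial\omega\partial-(-1)^{d+1}\,\partial\omega\partial+(-1)^{d+1}(-1)^d\,\omega\partial^2.
\end{align*}
The outer terms vanish because $\partial^2=0$, and the two middle terms cancel because $-(-1)^{d+1}=(-1)^d$, leaving $d(d\omega)=0$.

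None of this is genuinely hard; the computation is routine and the result is exactly the one recorded in \cite{forman-novikov}, so one could alternatively just cite that source. The one place that takes a moment of care is the locality argument, where transitivity of ``being a face of'' in a regular cell complex is what guarantees that iterated boundaries and iterated applications of a local map stay among the faces of the original cell; the only other thing to watch is the bookkeeping of the sign $(-1)^d$ in the definition of the differential.
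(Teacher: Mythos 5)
Your proof is correct. The paper itself gives no argument for this lemma --- it simply cites Forman's Combinatorial Novikov--Morse theory paper --- so there is no internal proof to compare against; your direct verification (degree count, locality via transitivity of the face relation in a regular cell complex, and the sign cancellation $-(-1)^{d+1}=(-1)^{d}$ together with $\partial^{2}=0$) is exactly the routine computation one would expect to find in the cited source, and all three steps check out.
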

This lemma determines the differential complex
\begin{eqnarray}
\begin{CD}
\Omega^*(M) : 0 @>>> \Omega^0(M) @>d>> \Omega^1(M) @>d>> \cdots @>d>> \Omega^n(M) @>>>0. \nonumber
\end{CD}
\end{eqnarray}

\begin{theorem}[\cite{forman-novikov}]
The cohomology of this complex is isomorphic to the singular cohomology of $M$.
That is,
\begin{eqnarray}
H^* (\Omega^* (M) ) \cong H^*(M).
\end{eqnarray}
\end{theorem}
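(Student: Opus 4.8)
The plan is to exhibit $\Omega^*(M)$ as the total complex of a bounded double complex and to run the associated spectral sequence, which will collapse onto the cellular cochain complex of $M$. First I would refine the grading: for $0\le q\le p\le n$ let $A^{p,q}$ be the space of \emph{local} linear maps $C_p(M)\to C_q(M)$, so that $\Omega^d(M)=\bigoplus_{p-q=d}A^{p,q}$. Unwinding $d\omega=\partial\circ\omega-(-1)^{d}\omega\circ\partial$ I would check that on $A^{p,q}$ the differential splits as $d=d'+d''$ with $d'=\partial\circ(-)\colon A^{p,q}\to A^{p,q-1}$ and $d''=-(-1)^{p-q}(-)\circ\partial\colon A^{p,q}\to A^{p+1,q}$, that $(d')^2=(d'')^2=d'd''+d''d'=0$, and hence that $(\Omega^*(M),d)=\operatorname{Tot}(A^{\bullet,\bullet},d',d'')$. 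Since $M$ is finite-dimensional and finite, the double complex is bounded, so the spectral sequence converges.

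Next I would filter by $p$ and compute the $d'$-cohomology first. Fixing a $p$-cell $\alpha$, the dual basis elements $e_{\alpha,\beta}$ (sending $\alpha\mapsto\beta$ and every other cell to $0$), with $\beta$ running over the faces of $\alpha$, span a $d'$-subcomplex, and $d'(e_{\alpha,\beta})=\sum_{\beta'\lessdot\beta}[\beta:\beta']\,e_{\alpha,\beta'}$ is literally the cellular boundary operator of the closed cell $\overline{\alpha}$. Because $M$ is regular, $\overline{\alpha}$ is a closed ball, so this complex is acyclic except for a copy of $\mathbb{R}$ in degree $q=0$, generated by the class $[e_{\alpha,v}]$ of any vertex $v\le\alpha$ (well defined since $\overline\alpha$ is connected). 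Hence $E_1$ is concentrated on the line $q=0$ with $E_1^{p,0}\cong C_p(M)$ as a vector space.

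Then I would identify the induced differential $d_1$, which is the map induced by $d''$: computing $d''(e_{\alpha,v})=-(-1)^{p}\sum_{\gamma\,\gtrdot\,\alpha}[\gamma:\alpha]\,e_{\gamma,v}$ and identifying $C_p(M)$ with the cellular cochain group $C^p(M)$ via the cell basis, $d_1$ becomes, up to an overall sign on each degree, the cellular coboundary $\delta$ of $M$. Therefore $E_2^{p,0}=H^p(M)$ and $E_2^{p,q}=0$ for $q>0$; for degree reasons all higher differentials vanish, the spectral sequence degenerates at $E_2$, and $H^*(\Omega^*(M))\cong H^*(M)$ (cellular cohomology $=$ singular cohomology for a regular CW complex).

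The main obstacle, and the one place where regularity is genuinely used, is the acyclicity of the $d'$-column complexes: one must know that the faces of a fixed cell $\alpha$, with the incidence numbers inherited from $M$, assemble into the cellular chain complex of a contractible space. For a regular cell complex this holds because each characteristic map is a homeomorphism onto its image, so $\overline\alpha$ is a genuine closed ball and a subcomplex; I would state this carefully and remark that, without the local hypothesis, the column complexes would instead compute $H_*(M)$ and the theorem would fail. The remaining issues — the sign bookkeeping in $d=d'+d''$ and in the identification $d_1=\pm\delta$, and the independence of $[e_{\alpha,v}]$ from $v$ — are routine. Alternatively, one can bypass the spectral sequence by writing down the explicit comparison map $R\colon\Omega^k(M)\to C^k(M)$, $R(\omega)(\beta)=\varepsilon(\omega(\beta))$ for a $k$-cell $\beta$, where $\varepsilon\colon C_0(M)\to\mathbb{R}$ is the augmentation; one checks $R$ is a cochain map after adjusting by signs (using $\varepsilon\circ\partial=0$) and proves it is a quasi-isomorphism via the same spectral sequence, which is the concrete form of the $E_\infty$-identification above.
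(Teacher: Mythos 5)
The paper does not prove this statement at all: it is quoted verbatim from Forman's Combinatorial Novikov--Morse theory paper and cited as such, so there is nothing internal to compare your argument against. On its own merits, your proof is correct and is essentially the standard (Forman-style) argument. The bigrading $\Omega^d(M)=\bigoplus_{p-q=d}A^{p,q}$, the splitting $d=d'+d''$ with the sign check $d'd''+d''d'=0$, the finite column filtration, and the degeneration at $E_2$ for placement reasons are all sound. You also put your finger on the one genuinely nontrivial input: for a \emph{regular} CW complex the closure $\overline{\alpha}$ of a cell is a subcomplex homeomorphic to a closed ball (Lundell--Weingram, which is in this paper's bibliography), so each column $(\,\{e_{\alpha,\beta}\}_{\beta\le\alpha},d'\,)$ is the cellular chain complex of a contractible space and is acyclic except for $H_0\cong\mathbb{R}$. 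One small point worth making explicit when you write this up: the column complex must contain the top generator $e_{\alpha,\alpha}$, i.e.\ it must be $C_*(\overline{\alpha})$ and not $C_*(\overline{\alpha}\setminus\alpha)$ --- this is guaranteed by the paper's convention that a cell is a face of itself (visible in its treatment of $0$-forms as $f(\sigma)=f_\sigma\sigma$), and without it the columns would have the homology of a sphere and the argument would collapse. The identification of $d_1$ with $\pm\delta$ and the independence of $[e_{\alpha,v}]$ from the vertex $v$ are routine as you say, and your alternative augmentation map $R(\omega)(\beta)=\varepsilon(\omega(\beta))$ is a clean way to make the quasi-isomorphism explicit.
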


\subsection{Laplacian for Combinatorial Defferential Forms}
Let us define an inner product on $C_* (M)$. For any two $p$-cells $\sigma,\sigma'$, we set an innner product as
\begin{eqnarray}
\langle \sigma, \sigma' \rangle = \delta _{\sigma,\sigma'} w_\sigma,
\end{eqnarray}
where $\delta _{\sigma,\sigma'}$ is the Kronecker's delta, that is, $\delta _{\sigma,\sigma'}=1$ for $\sigma=\sigma'$ and the others are 0, and $w_\sigma>0$ is a weight of a cell $\sigma$. We define the $L^2$ inner product for combinatorial differential forms. For two $d$-forms $u,v$, we set
\begin{eqnarray}
\langle u,v \rangle= \sum_{\sigma} \frac{1}{w_{\sigma}} \langle u(\sigma), v(\sigma) \rangle,
\end{eqnarray}
where the sum is taken over all cells $\sigma$ in $M$. \\
Let us consider the adjecent operator of differential with respect to the inner product,
\begin{eqnarray}
d^* : \Omega^d (M) \rightarrow \Omega^{d-1}(M).
\end{eqnarray}
That is, for a $d$-form $u$ and a $(d-1)$-form $v$ we have
\begin{eqnarray}
\langle d^*u,v \rangle=\langle u,dv\rangle.
\end{eqnarray}

The space of combinatorial differential $d$-forms $\Omega ^d (M)$ is a sub vector space of the space of linear maps of degree $d$ on the chain $C_*(M)$. Then we set $p$ as the projection on the space of linear maps of degree $d$ on the chain $C_*(M)$ to the space of combinatorial differential $d$-forms $\Omega ^d (M)$. 

\begin{lemma}
For any $d$-form $\omega$, we have
\begin{eqnarray}\label{dstar}
d^* = p\circ (\partial ^* \circ \omega - (-1)^{(d-1)} \omega \circ \partial ^*).
\end{eqnarray}
\end{lemma}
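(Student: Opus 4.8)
The plan is to check directly that the operator defined by the right-hand side of \eqref{dstar}, applied to a $d$-form $\omega$, satisfies the defining property of the adjoint, $\langle d^*\omega,v\rangle=\langle\omega,dv\rangle$ for all $v\in\Omega^{d-1}(M)$, and then to conclude by uniqueness of the adjoint. As a preliminary step I would extend the $L^2$ inner product from $\Omega^d(M)$ to the full space of degree-$d$ linear maps $C_*(M)\to C_*(M)$ using the very same formula $\langle u,v\rangle=\sum_\sigma\frac{1}{w_\sigma}\langle u(\sigma),v(\sigma)\rangle$; with respect to this extension $\Omega^{d-1}(M)$ is a subspace, $p$ is its orthogonal projection, and hence $\langle w,v\rangle=\langle p(w),v\rangle$ whenever $v\in\Omega^{d-1}(M)$ and $w$ is any degree-$(d-1)$ linear map.

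Next I would expand $\langle\omega,dv\rangle$. Since $v$ has degree $d-1$, we have $dv=\partial\circ v-(-1)^{d-1}v\circ\partial$, so $\langle\omega,dv\rangle$ splits as $\sum_\sigma\frac{1}{w_\sigma}\langle\omega(\sigma),\partial(v(\sigma))\rangle-(-1)^{d-1}\sum_\sigma\frac{1}{w_\sigma}\langle\omega(\sigma),v(\partial\sigma)\rangle$. In the first sum I would move $\partial$ across the inner product on chains, $\langle\omega(\sigma),\partial(v(\sigma))\rangle=\langle(\partial^*\circ\omega)(\sigma),v(\sigma)\rangle$, so that the first sum is precisely $\langle\partial^*\circ\omega,v\rangle$ in the extended inner product.

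For the second sum I would write $\partial\sigma=\sum_\tau[\sigma:\tau]\,\tau$ with incidence coefficients $[\sigma:\tau]$, expand $v(\partial\sigma)=\sum_\tau[\sigma:\tau]v(\tau)$, interchange the two summations, and recognize the resulting double sum over incident pairs $(\sigma,\tau)$ as $\langle\omega\circ\partial^*,v\rangle$. This uses the explicit description $\partial^*\tau=\sum_\sigma\frac{w_\tau}{w_\sigma}[\sigma:\tau]\,\sigma$, obtained by solving $\langle\partial\sigma,\tau\rangle=\langle\sigma,\partial^*\tau\rangle$; the weight factor $w_\tau/w_\sigma$ it produces cancels against the $1/w_\tau$ of the $L^2$ normalization on the one hand and the $1/w_\sigma$ of the original sum on the other. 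Combining the two sums gives $\langle\omega,dv\rangle=\langle\partial^*\circ\omega-(-1)^{d-1}\omega\circ\partial^*,\,v\rangle$ for every $v\in\Omega^{d-1}(M)$.

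Finally, $\partial^*\circ\omega-(-1)^{d-1}\omega\circ\partial^*$ is a priori only a degree-$(d-1)$ linear map and need not be local, so I would apply $p$: by orthogonality of $p$, $\langle p(\partial^*\circ\omega-(-1)^{d-1}\omega\circ\partial^*),v\rangle=\langle\partial^*\circ\omega-(-1)^{d-1}\omega\circ\partial^*,v\rangle=\langle\omega,dv\rangle$ for every $v\in\Omega^{d-1}(M)$, and the left member lies in $\Omega^{d-1}(M)$. Since $d^*\omega$ is the unique element of $\Omega^{d-1}(M)$ with this property, we obtain \eqref{dstar}. I expect the one genuine subtlety to be the second sum: correctly reindexing the double sum over incident cell pairs and verifying that the weights $w_\tau/w_\sigma$ coming from $\partial^*$ match those produced by the two $L^2$ normalizations; the point that $p$ is genuinely the \emph{orthogonal} projection for the extended inner product also deserves an explicit word, but the rest is a direct unwinding of the definitions.
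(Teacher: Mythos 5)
Your proposal is correct and, at bottom, pursues the same strategy as the paper: show that the operator on the right-hand side of \eqref{dstar} satisfies the defining identity $\langle d^*\omega,v\rangle=\langle\omega,dv\rangle$ for all $(d-1)$-forms $v$, then invoke uniqueness of the adjoint (legitimate here because the $L^2$ inner product is positive definite, as $\langle e^\tau_\sigma,e^\tau_\sigma\rangle=w_\sigma/w_\tau>0$). The difference is in execution. The paper verifies the identity pairwise on the basis elements $e^\tau_\sigma$, $e^\alpha_\beta$, computing both sides explicitly and matching the weight factors $\frac{w_\sigma}{w_\tau}(-1)^{\tau>\alpha}$ by hand; you work coordinate-free, pushing the boundary operator across the chain-level inner product via $\partial^*\tau=\sum_\sigma\frac{w_\tau}{w_\sigma}[\sigma:\tau]\,\sigma$, so that the weight bookkeeping is absorbed once and for all into $\partial^*$. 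The reindexing you flag as the main subtlety does close up: both $\sum_\sigma\frac{1}{w_\sigma}\langle\omega(\sigma),v(\partial\sigma)\rangle$ and $\sum_\tau\frac{1}{w_\tau}\langle\omega(\partial^*\tau),v(\tau)\rangle$ reduce to the same double sum $\sum_{\sigma>\tau}\frac{1}{w_\sigma}[\sigma:\tau]\langle\omega(\sigma),v(\tau)\rangle$ over incident pairs. Your route also has one genuine advantage of rigor: you make explicit that $p$ must be the \emph{orthogonal} projection with respect to the extended inner product in order to drop it against test forms $v\in\Omega^{d-1}(M)$. The paper uses this silently when it passes to \eqref{dstar2} with $p$ already removed; the needed fact does hold, since the full collection $\{e^\tau_\sigma\}$ (over all pairs of cells of the right codimension, incident or not) is an orthogonal basis for the extended inner product and $\Omega^{d-1}(M)$ is spanned by the incident ones.
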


\begin{proof}
 For a $p$-dimensional cell $\tau$ and a $(p-d)$-dimensional cell $\sigma$ which is a face of $\tau$, we put a $d$-form $e^{\tau} _{\sigma}$ such that 
 \begin{eqnarray}
 e^{\tau} _{\sigma} (\tau) = \sigma,
 \end{eqnarray} 
and the value is $0$ for other cells. They form a basis of $d$-forms as a real vector space.

Let $\tau,\sigma,\alpha,\beta$ be cells, and assume that $e^{\tau} _{\sigma}$ is a $d$-form and $e^{\alpha} _{\beta}$ is a $(d-1)$-form. Then we have
\begin{eqnarray}\label{dstar1}
\langle d^* e^{\tau} _{\sigma},e^{\alpha} _{\beta} \rangle &=& \langle e^{\tau} _{\sigma},d e^{\alpha} _{\beta}\rangle \\
                                                       &=& \langle e^{\tau} _{\sigma}, \partial \circ e^{\alpha} _{\beta} \rangle -(-1)^{(d-1)} \langle e^{\tau} _{\sigma},e^{\alpha} _{\beta} \circ \partial \rangle. \nonumber
\end{eqnarray}
We put $A$ as the right-hand side of equation \eqref{dstar}, and have
\begin{eqnarray}\label{dstar2}
\langle Ae^{\tau} _{\sigma},e^{\alpha} _{\beta} \rangle = \langle e^{\tau} _{\sigma},\partial e^{\alpha} _{\beta} \rangle-(-1)^{(d-1)} \langle e^{\tau} _{\sigma} \circ  \partial ^*, e^{\alpha} _{\beta}\rangle.
\end{eqnarray}
Now we calculate the last term of equation \eqref{dstar1},
\begin{eqnarray}
\langle e^{\tau} _{\sigma},e^{\alpha} _{\beta} \circ \partial \rangle &=& \sum_{c: {\rm cell}} \frac{1}{w_c} \langle e^{\tau} _{\sigma}(c) ,e^{\alpha} _{\beta} ( \partial c)\rangle\\
                                                                    &=& \frac{1}{w_c} \langle \sigma, e^\alpha _\beta (\partial \tau)\rangle \nonumber\\
&=&     \left\{ \begin{split} & \frac{w_\sigma}{w_\tau } (-1)^{\tau > \alpha} & {\rm for}~ \tau>\alpha ,~ \sigma=\beta  \\
&0 &{\rm otherwise}.
\end{split} \right. \nonumber
 \end{eqnarray}
 We calculate the last term of eqation \eqref{dstar2},
 \begin{eqnarray}
 \langle e^{\tau} _{\sigma} \circ  \partial ^*, e^{\alpha} _{\beta}\rangle &=& \sum_{c:{\rm cell}} \frac{1}{w_c} \langle e^{\tau} _{\sigma} ( \partial ^* c), e^{\alpha} _{\beta}(c)\rangle  \\
                                                                           &=& \frac{1}{w_\alpha} \langle e^\tau_\sigma (\partial ^* \alpha),\beta \rangle \nonumber \\
&=&     \left\{ \begin{split} & \frac{w_\sigma}{w_\tau } (-1)^{\tau > \alpha} & {\rm for}~ \tau>\alpha , ~\sigma=\beta  \\
&0 &{\rm otherwise}.
\end{split} \right. \nonumber
 \end{eqnarray}
Then we have
\begin{eqnarray}
d^* e^\tau _\sigma = A e^\tau _\sigma.
\end{eqnarray}
 
\end{proof}

\begin{definition}
We define {\it the Laplacian for combinatorial defferential forms} by
\begin{eqnarray}
\Delta = d d^* + d^* d.
\end{eqnarray}
\end{definition}

\begin{theorem}
Let $M$ be a finite regular cell-complex. Then we have
\begin{eqnarray}
\operatorname{Ker} (\Delta) \cong H^*(\Omega^* (M)) \cong H^*(M). 
\end{eqnarray}

\end{theorem}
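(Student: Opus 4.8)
The plan is to prove the combinatorial Hodge theorem, namely that the space of harmonic forms $\operatorname{Ker}(\Delta)$ is isomorphic to $H^*(\Omega^*(M))$; the remaining isomorphism $H^*(\Omega^*(M)) \cong H^*(M)$ is then exactly the theorem of Forman and Novikov recalled above. Since $M$ is finite, each $\Omega^d(M)$ is a finite-dimensional real inner product space under the $L^2$ product, so the whole argument lives in elementary linear algebra. First I would record that the operator $d^*$ produced by the preceding lemma is the genuine adjoint of $d$, i.e. $\langle d^*u, v\rangle = \langle u, dv\rangle$ for all forms $u,v$; the projection $p$ in the formula for $d^*$ only serves to land back inside $\Omega^{d-1}(M)$ and does not disturb this adjunction. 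In particular, on these finite-dimensional spaces $(d^*)^* = d$, so $\Delta = dd^* + d^*d$ is self-adjoint and preserves degree.

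Next I would compute, for $\omega \in \Omega^d(M)$,
\[
\langle \Delta\omega, \omega\rangle = \langle dd^*\omega,\omega\rangle + \langle d^*d\omega,\omega\rangle = \|d^*\omega\|^2 + \|d\omega\|^2 ,
\]
which shows that $\Delta$ is positive semi-definite and that $\omega$ is harmonic if and only if $d\omega = 0$ and $d^*\omega = 0$; hence $\operatorname{Ker}(\Delta) \cap \Omega^d(M) = \operatorname{Ker}(d) \cap \operatorname{Ker}(d^*)$, which I denote $\mathcal{H}^d$. The core step is then the orthogonal Hodge decomposition
\[
\Omega^d(M) = \mathcal{H}^d \oplus d\bigl(\Omega^{d-1}(M)\bigr) \oplus d^*\bigl(\Omega^{d+1}(M)\bigr).
\]
I would establish pairwise orthogonality of the three summands directly from the adjunction and $d^2 = 0$ (for instance $\langle d\alpha, d^*\beta\rangle = \langle d^2\alpha, \beta\rangle = 0$, and any harmonic form is orthogonal to both images), and then identify the orthogonal complement of $d(\Omega^{d-1}(M)) \oplus d^*(\Omega^{d+1}(M))$ with $\mathcal{H}^d$: a form orthogonal to all $d^*\beta$ has $d\omega = 0$, and a form orthogonal to all $d\alpha$ has $d^*\omega = 0$.

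From the decomposition one gets $\operatorname{Ker}(d) = \mathcal{H}^d \oplus d(\Omega^{d-1}(M))$: if $d\omega = 0$, write $\omega = h + d\alpha + d^*\beta$, so that $0 = d\omega = dd^*\beta$ forces $\|d^*\beta\|^2 = \langle dd^*\beta, \beta\rangle = 0$, hence $d^*\beta = 0$. Therefore the natural map $\mathcal{H}^d \to H^d(\Omega^*(M)) = \operatorname{Ker}(d)/d(\Omega^{d-1}(M))$ is an isomorphism; summing over $d$ gives $\operatorname{Ker}(\Delta) \cong H^*(\Omega^*(M))$, and composing with the Forman--Novikov isomorphism gives $\cong H^*(M)$. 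I do not anticipate a serious obstacle here, as this is the standard finite-dimensional Hodge argument; the only point requiring care is verifying that the adjoint $d^*$ built from the projection $p$ genuinely satisfies $(d^*)^* = d$, which is immediate once the adjunction identity of the preceding lemma is in hand and one works over the finite-dimensional spaces $\Omega^*(M)$.
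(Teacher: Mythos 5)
Your argument is correct, and it follows the same overall strategy as the paper (reduce to showing that $u \mapsto [u]$ gives an isomorphism $\operatorname{Ker}(\Delta) \to H^*(\Omega^*(M))$, then invoke the Forman--Novikov isomorphism $H^*(\Omega^*(M)) \cong H^*(M)$), but the key surjectivity step is executed by a genuinely different mechanism. The paper expands a closed form $u$ in eigenvectors of $\Delta$, writes $u=\sum_i u_i$ with $\Delta u_i = \lambda_i u_i$, and exhibits the explicit primitive $u' = \sum_{\lambda_i\neq 0} d^*u_i/\lambda_i$ so that $u = u_0 + du'$ with $u_0$ harmonic --- essentially a Green's-operator construction; note that this silently uses that $d$ commutes with $\Delta$, so that each eigencomponent $u_i$ of a closed form is itself closed and hence $dd^*u_i=\Delta u_i=\lambda_i u_i$. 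You instead establish the three-term orthogonal decomposition $\Omega^d(M) = \mathcal{H}^d \oplus d(\Omega^{d-1}(M)) \oplus d^*(\Omega^{d+1}(M))$ and read off $\operatorname{Ker}(d) = \mathcal{H}^d \oplus d(\Omega^{d-1}(M))$. The two routes are equivalent in this finite-dimensional setting; yours buys an explicit proof of injectivity (a harmonic exact form is orthogonal to itself), which the paper only asserts, and avoids the commutation argument entirely, at the cost of verifying the three orthogonality relations. Your one flagged caveat --- that $(d^*)^*=d$ --- is indeed automatic once the adjunction $\langle d^*u,v\rangle=\langle u,dv\rangle$ of the preceding lemma is known, since these are finite-dimensional inner product spaces.
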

\begin{proof}
The Laplacian is a self-adjoint operator on the finite dimensional vector space $\Omega ^* (M)$ and we have $\operatorname{Ker} (\Delta) = \operatorname{Ker}(d) \cap \operatorname{Ker} (d^*)$. We consider a map
\begin{eqnarray}\label{laplaceop}
\operatorname{Ker} (\Delta) &\rightarrow& H^*(\Omega^* (M))\\
 u&\mapsto& [u].\nonumber
\end{eqnarray}
This map is well-defined and injective. Next we prove that this map is surjective. The Laplacian has an eigen decomposition, and we denote the eigen values of the Laplacian by $\lambda_0,...,\lambda_k$ are eigen values. Let $u$ be a closed form. Then we have an eigen decomposition for the Laplacian
\begin{eqnarray}
\Delta u = \sum_{i=1,...,k} \lambda _i u_i,
\end{eqnarray}
where $u_i$ are eigen vectors for $\lambda_i$ respectively such that $u=\sum_{i=0,...,k} u_i$.
Then putting
\begin{eqnarray}
u' = \sum_{i=1,...,k} \frac{d^*u_i}{\lambda_i},
\end{eqnarray}
we have
\begin{eqnarray}
u = u_0 +d u'.
\end{eqnarray}
We conclude that the map \eqref{laplaceop} is an isomorphism.
\end{proof}

\subsection{Combinatorial function on cell-complex}
We realize a combinatorial 0-form as a function. We set $f\in \Omega ^0 (M)$, that is,
\begin{eqnarray}
f: C^*(M) \rightarrow C^*(M).
\end{eqnarray}
For any cell $\sigma$, we have
\begin{eqnarray}
f(\sigma) = f_\sigma \sigma,
\end{eqnarray}
and we realize $f_\sigma \in \mathbf{R}$ as the value of the function $f$. For a $p$-dimensional cell $\tau$, the derivative of $f$ is 
\begin{eqnarray}
df(\tau) = \sum_{\sigma: \tau > \sigma} (f(\tau) -f(\sigma)) (-1)^{\tau > \sigma} \sigma,
\end{eqnarray}
where  the sum is taken over all $(p-1)$-dimensional cells $\sigma$ that are faces of $\tau$, and $(-1)^{\tau > \sigma}$ is the incidence number between $\tau$ and $\sigma$.
\begin{lemma}
Let $M$ be a regular cell-complex and $f$ a function on $M$, where we identify $M$ with the set of cells of $M$. $f$ is locally constant if and only if $df=0$.
\end{lemma}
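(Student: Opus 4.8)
The plan is to unwind the definition of $df$ and use the fact that $M$ is connected as a cell complex --- or, more precisely, to argue edge by edge so that no global connectivity is needed beyond what ``locally constant'' already encodes. First I would recall that, by the formula given just above the lemma, for every cell $\tau$ of dimension $p\ge 1$ we have
\begin{eqnarray}
df(\tau) = \sum_{\sigma:\,\tau>\sigma} (f_\tau - f_\sigma)(-1)^{\tau>\sigma}\,\sigma,
\end{eqnarray}
and that the cells $\sigma$ appearing here are pairwise distinct $(p-1)$-cells, so they are linearly independent in $C_{p-1}(M)$. Hence $df(\tau)=0$ if and only if $(f_\tau-f_\sigma)(-1)^{\tau>\sigma}=0$ for every facet $\sigma<\tau$, and since the incidence number $(-1)^{\tau>\sigma}$ is $\pm 1$ (here we use regularity of $M$, which forces incidence numbers in $\{+1,-1,0\}$ and $\pm 1$ exactly on genuine facets), this is equivalent to $f_\tau=f_\sigma$ for all facets $\sigma$ of $\tau$.

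The forward direction is then immediate: if $f$ is locally constant, then in particular $f_\tau=f_\sigma$ whenever $\sigma<\tau$ is a facet (these lie in a common neighborhood), so every coefficient above vanishes and $df=0$. For the converse, suppose $df=0$. By the equivalence just established, $f$ takes the same value on any cell $\tau$ and each of its facets. I would then propagate this along the Hasse diagram: given any two cells in the same connected component that share a face or cofacet relation, their $f$-values agree, and iterating through chains $\sigma_0 < \tau_1 > \sigma_1 < \tau_2 > \cdots$ shows $f$ is constant on each connected component, i.e.\ locally constant. The one point that needs care is the precise meaning of ``locally constant'' for a cell complex --- I would take it to mean constant on the closure of each cell (equivalently, constant on connected components of $|M|$), and then the Hasse-diagram connectivity argument is exactly the statement that these components are the equivalence classes of the relation generated by the face relation.

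The main obstacle, such as it is, is purely bookkeeping: making sure the $\pm 1$ incidence numbers never vanish on true facets (so that $df(\tau)=0$ really does force equality of all the relevant coefficients rather than merely a linear relation among them), which is where regularity of $M$ is used, and fixing a definition of ``locally constant'' that makes the two-sided implication clean. No deep input is required --- the result is essentially the observation that $\ker d$ on $0$-forms is spanned by indicator functions of connected components, transported through the identification of $0$-forms with functions on cells.
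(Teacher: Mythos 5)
The paper states this lemma without any proof, so there is nothing to compare against line by line; your argument is correct and is the natural one. Unwinding $df(\tau)=\sum_{\sigma:\tau>\sigma}(f_\tau-f_\sigma)(-1)^{\tau>\sigma}\sigma$, using that the $(p-1)$-cells form a basis of $C_{p-1}(M)$ and that regularity forces the incidence numbers on facets to be $\pm1$, correctly reduces $df=0$ to $f_\tau=f_\sigma$ for every facet pair, and the zigzag propagation through the Hasse diagram then gives constancy on connected components. Your explicit care about what ``locally constant'' means is warranted, since the paper does not define it either; with the reading ``constant on connected components of $|M|$'' the equivalence is complete.
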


\subsection{Combinatorial 1-form on cell-complex}
Let $\omega \in \Omega^1 (M)$ be a combinatorial 1-form. For a $p$-dimensional cell $\tau$, we set
\begin{eqnarray}
\omega (\tau) = \sum_{\sigma: \tau > \sigma} \omega^\tau _\sigma(-1)^{\tau > \sigma} \sigma,
\end{eqnarray}
where  the sum is taken over all $(p-1)$-dimensional cells $\sigma$ that are faces of $\tau$, and $(-1)^{\tau > \sigma}$ is the incidence number between $\tau$ and $\sigma$. We call the pair $(\tau>\sigma)$ {\it a vector} provided that a $p$-dimensional cell $\sigma$ is a face of $(p+1)$-dimensional cell $\tau$. We say that $\omega$ has the value $\omega^\tau_\sigma$ at the vector $(\tau >\sigma)$.
For a $p$-dimensional cell $\mu$, the derivative of $\omega$ is
\begin{eqnarray}
d\omega (\mu) = \sum_{(\mu >\tau ,\tau'>\sigma)} (\omega^\mu_\tau +\omega^\tau_\sigma - \omega^\mu_{\tau'} -\omega^{\tau'}_\sigma) (-1)^{\mu>\tau} (-1)^{\tau > \sigma} \sigma,
\end{eqnarray}
where the sum is taken over all two $(p-1)$-dimensional cells $\tau,\tau'$ and $(p-2)$-dimensional cells $\sigma$ such that 
\begin{eqnarray}
\mu > \tau > \sigma,~ \mu > \tau' > \sigma,~ \tau \neq \tau'.
\end{eqnarray}

\begin{proposition}
For a combinatorial 1-form $\omega$, we have $d\omega =0$ if and only if
\begin{eqnarray}
\omega^\mu_\tau +\omega^\tau_\sigma =\omega^\mu_{\tau'} +\omega^{\tau'}_\sigma
\end{eqnarray}
 for any $p$-dimensional cell $\mu$, any two $(p-1)$ dimensional cells $\tau,\tau'$ and any $(p-2)$-dimensional cell $\sigma$ such that 
\begin{eqnarray}
\mu > \tau > \sigma, \mu > \tau' > \sigma, \tau \neq \tau'.
\end{eqnarray}
\end{proposition}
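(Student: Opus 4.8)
The plan is to read the asserted identity directly off the explicit formula for $d\omega(\mu)$ displayed just above the statement, using only two structural inputs: that a combinatorial form vanishes exactly when it vanishes on every cell, and that in a regular cell complex each $(p-2)$-cell $\sigma$ lying below a $p$-cell $\mu$ is a face of \emph{exactly two} $(p-1)$-cells $\tau$ with $\mu>\tau>\sigma$ (the diamond property of regular CW complexes), together with the incidence relation coming from $\partial^2=0$.

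First I would note that $d\omega=0$ if and only if $d\omega(\mu)=0$ for every cell $\mu$ of $M$, since the cells form a basis of $C_*(M)$. Fixing a $p$-cell $\mu$ and regrouping the sum in the formula for $d\omega(\mu)$ according to the $(p-2)$-cell $\sigma$, one writes $d\omega(\mu)=\sum_\sigma c_\sigma\,\sigma$, where $\sigma$ ranges over the $(p-2)$-dimensional faces of $\mu$ and
\[
c_\sigma=\sum_{\substack{\mu>\tau>\sigma,\ \mu>\tau'>\sigma\\ \tau\neq\tau'}}\bigl(\omega^\mu_\tau+\omega^\tau_\sigma-\omega^\mu_{\tau'}-\omega^{\tau'}_\sigma\bigr)(-1)^{\mu>\tau}(-1)^{\tau>\sigma}.
\]
Since distinct cells are linearly independent in $C_{p-2}(M)$, $d\omega(\mu)=0$ holds if and only if $c_\sigma=0$ for every such $\sigma$. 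This immediately settles the ``if'' direction: if $\omega^\mu_\tau+\omega^\tau_\sigma=\omega^\mu_{\tau'}+\omega^{\tau'}_\sigma$ for all admissible quadruples $\mu,\tau,\tau',\sigma$, then every summand of every $c_\sigma$ is zero, hence $d\omega=0$.

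For the converse I would invoke regularity of $M$: given $\mu>\sigma$ with $\dim\mu-\dim\sigma=2$, there are exactly two $(p-1)$-cells $\tau_1,\tau_2$ with $\mu>\tau_i>\sigma$, so the sum defining $c_\sigma$ has precisely the two terms indexed by the ordered pairs $(\tau_1,\tau_2)$ and $(\tau_2,\tau_1)$. The identity $\partial\partial\mu=0$, read off in the coefficient of $\sigma$, yields $(-1)^{\mu>\tau_1}(-1)^{\tau_1>\sigma}=-(-1)^{\mu>\tau_2}(-1)^{\tau_2>\sigma}$; substituting this shows the two terms coincide, so that $c_\sigma=2\bigl(\omega^\mu_{\tau_1}+\omega^{\tau_1}_\sigma-\omega^\mu_{\tau_2}-\omega^{\tau_2}_\sigma\bigr)(-1)^{\mu>\tau_1}(-1)^{\tau_1>\sigma}$. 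As the incidence factor is $\pm1$ and $2\neq0$ in $\mathbf{R}$, the equation $c_\sigma=0$ forces $\omega^\mu_{\tau_1}+\omega^{\tau_1}_\sigma=\omega^\mu_{\tau_2}+\omega^{\tau_2}_\sigma$, which is exactly the claimed identity for $\{\tau,\tau'\}=\{\tau_1,\tau_2\}$; ranging over all $\mu$ and all codimension-two faces $\sigma$ gives the full statement.

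The only step that is not pure bookkeeping is this last one: collapsing the two summands of $c_\sigma$ into a single scalar equation requires both the diamond property of regular cell complexes and the incidence-number relation from $\partial^2=0$, and it is precisely this input that makes the equivalence two-sided rather than a single implication. I expect that to be the crux; the rest is unwinding the definitions of $d\omega$ and of vanishing of a combinatorial form.
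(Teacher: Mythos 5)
Your proposal is correct, and in fact it supplies something the paper itself omits: the proposition is stated in the source with no proof at all, so there is no argument of the author's to compare yours against. Your reduction is the natural one --- $d\omega=0$ iff $d\omega(\mu)=0$ for every cell $\mu$, then linear independence of the $(p-2)$-cells reduces everything to the vanishing of each coefficient $c_\sigma$ --- and the ``if'' direction is indeed immediate term by term. You also correctly isolate the only nontrivial point, namely the converse: the sum defining $c_\sigma$ could in principle vanish by cancellation among its terms, and ruling that out requires exactly the two inputs you name, the diamond property of regular CW complexes (exactly two $(p-1)$-cells $\tau_1,\tau_2$ with $\mu>\tau_i>\sigma$) and the incidence relation $(-1)^{\mu>\tau_1}(-1)^{\tau_1>\sigma}=-(-1)^{\mu>\tau_2}(-1)^{\tau_2>\sigma}$ coming from $\partial^2=0$. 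With those, your computation $c_\sigma=2\bigl(\omega^\mu_{\tau_1}+\omega^{\tau_1}_\sigma-\omega^\mu_{\tau_2}-\omega^{\tau_2}_\sigma\bigr)(-1)^{\mu>\tau_1}(-1)^{\tau_1>\sigma}$ is right, and since the incidence factor is a unit the conclusion follows. The only cosmetic caveat is that the paper's displayed formula for $d\omega(\mu)$ is ambiguous as to whether the index set consists of ordered or unordered pairs $(\tau,\tau')$; you read it as ordered, which at worst changes $c_\sigma$ by a factor of $2$ and affects nothing in the equivalence. Your proof stands as a complete and correct filling of the paper's gap.
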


For any cell $\sigma$, the dual derrivative of $\omega$ is
\begin{eqnarray}
d^* \omega (\sigma) =\left( -\sum_{\tau :\tau>\sigma} \frac{w_\sigma}{w_\tau} \omega^\tau_\sigma + \sum_{\rho:\rho<\sigma}\frac{w_\sigma}{w_\rho} \omega^\sigma_\rho  \right) \sigma,
\end{eqnarray}
where the first sum is taken over all $(p+1)$-dimensional cells $\tau$ that have $\sigma$ as a face, and the second sum is over all $(p-1)$-dimensional cells $\rho$ that are the faces of $\sigma$.

\section{Combinatorial Vector field on cell-complex}
\begin{definition}
We call a linear map $X:C_* (M)\rightarrow C_{(*+1)} (M)$ {\it a combinatorial vector field} on a cell-complex $M$ provided that for any cell $\sigma$ any component of $X(\sigma)$ has $\sigma$ as a face. 
\end{definition}
In the same manner as a 1-form, for a $p$-dimensional cell $\sigma$ we set
\begin{eqnarray}
X(\sigma)= \sum_{\tau:\tau>\sigma} X^\tau_{\sigma} (-1)^{\tau>\sigma} \tau,
\end{eqnarray}
where  the sum is taken over all $(p-1)$-dimensional cells $\tau$ that have $\sigma$ as a face, and $(-1)^{\tau > \sigma}$ is the incidence number between $\tau$ and $\sigma$.

For a 1-form $\omega$ and a vector field $X$ on $M$, we define {\it the pairing}
\begin{eqnarray}
\omega (X) (\sigma) = \sum_{\tau:\tau>\sigma} \omega^\tau_\sigma X^\tau_\sigma.
\end{eqnarray}
Then for a function $f$ on $M$, we define
\begin{eqnarray}
X(f) (\sigma) = df(X) (\sigma) = \sum_{\tau:\tau>\sigma} X^\tau_\sigma (f(\tau) -f(\sigma)).
\end{eqnarray}
\begin{definition}
Let $f$ be a function on $M$. We define {\it the gradient vector field $\operatorname{grad}(f)$ of $f$} by
\begin{eqnarray}
\operatorname{grad}(f)^\tau_{\sigma} = \frac{w_\sigma}{w_\tau} (f(\tau)-f(\sigma)).
\end{eqnarray}
Let $X$ be a vector field on $M$. We also define {\it the divergence $\operatorname{div} (X)$  of $f$} by
\begin{eqnarray}
\operatorname{div} (X) (\sigma) = -\sum_{\tau^{(p+1)}:\tau>\sigma} X^\tau_\sigma +\sum_{\rho^{(p-1)}:\rho>\sigma } X^\sigma_\rho
\end{eqnarray}
\end{definition}
We define the inner product for vector fields in the same manner as combinatorial defferential forms, i.e. for two vector fields $X,Y$
\begin{eqnarray}
\langle X,Y\rangle(\sigma) = \frac{1}{w_\sigma} \langle X(\sigma),Y(\sigma) \rangle=\sum_{\tau:\tau>\sigma} \frac{w_\tau}{w_\sigma} X^\tau_\sigma Y^\tau_\sigma.
\end{eqnarray}
Then we have
\begin{eqnarray}
df(X) = \langle X,\operatorname{grad}(f)\rangle.
\end{eqnarray}

\begin{definition}
For a function $f$ on $M$, we define {\it the integral of $f$ over $M$} by
\begin{eqnarray}
\int_M f = \sum_{\sigma} f(\sigma),
\end{eqnarray}
where the sum is taken over all cells of $M$.
\end{definition}
\begin{theorem}\label{Green}
We assume that $M$ is a finite regular cell-complex. Let $f$ be a function on $M$ and $X$ a vector fieldon $M$. Then we have
\begin{eqnarray}
\int_M \langle \operatorname{grad}(f),X\rangle =\int_M f\operatorname{div}(X).
\end{eqnarray}
\end{theorem}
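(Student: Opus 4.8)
The plan is to expand both sides into sums over all \emph{vectors} $(\tau>\sigma)$ of $M$ (incident pairs of cells of consecutive dimension) and to verify that the two resulting expressions coincide after a single reindexing; this is the combinatorial analogue of integration by parts and essentially amounts to bookkeeping.

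First I would rewrite the left-hand side. Using the definition of the inner product of vector fields and then that of $\operatorname{grad}(f)$, the weight $w_\tau/w_\sigma$ appearing in $\langle\,\cdot\,,\cdot\,\rangle(\sigma)$ cancels the factor $w_\sigma/w_\tau$ in $\operatorname{grad}(f)^\tau_\sigma$, so that
\begin{eqnarray*}
\int_M \langle \operatorname{grad}(f), X\rangle = \sum_{\sigma}\ \sum_{\tau:\tau>\sigma}\bigl(f(\tau)-f(\sigma)\bigr)X^\tau_\sigma = \sum_{(\tau>\sigma)} f(\tau)X^\tau_\sigma - \sum_{(\tau>\sigma)} f(\sigma)X^\tau_\sigma,
\end{eqnarray*}
where the last two sums run over all vectors of $M$. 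Next I would expand the right-hand side by inserting the definition of $\operatorname{div}(X)$,
\begin{eqnarray*}
\int_M f\operatorname{div}(X) = -\sum_{\sigma} f(\sigma)\sum_{\tau:\tau>\sigma} X^\tau_\sigma + \sum_{\sigma} f(\sigma)\sum_{\rho:\sigma>\rho} X^\sigma_\rho.
\end{eqnarray*}
The first double sum is literally the second sum in the expansion of the left-hand side. For the second double sum, as $\sigma$ ranges over all cells and $\rho$ over its codimension-one faces the pair $(\sigma,\rho)$ ranges bijectively over all vectors of $M$, so relabelling $(\sigma,\rho)$ as $(\tau,\sigma)$ turns it into $\sum_{(\tau>\sigma)} f(\tau)X^\tau_\sigma$, which is the first sum on the left. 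Adding the two contributions yields the claimed equality.

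The step I would flag as the crux is this reindexing — the observation that the set of vectors of $M$ is enumerated equally well by choosing a cell together with one of its codimension-one faces, or by choosing a cell together with one of its codimension-one cofaces — together with the remark that finiteness of $M$ makes all the sums involved finite, so the rearrangement is legitimate and no leftover term survives (the discrete counterpart of the vanishing of the boundary integral in Green's formula on a closed manifold). Everything else is substitution of definitions and the cancellation of weights noted above.
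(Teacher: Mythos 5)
Your proposal is correct and follows essentially the same route as the paper: expand the left-hand side, note that the weights $w_\tau/w_\sigma$ and $w_\sigma/w_\tau$ cancel to give $\sum_{(\tau>\sigma)}X^\tau_\sigma(f(\tau)-f(\sigma))$, and then regroup this sum over vectors by the lower-dimensional cell (equivalently, your reindexing of the coface sum) to recover $\int_M f\operatorname{div}(X)$. You merely spell out the reindexing step more explicitly than the paper does; no gap.
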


\begin{proof}
For a cell $\sigma$, we have
\begin{eqnarray}
\sum_{\sigma} \langle\operatorname{grad}(f),X\rangle (\sigma)&=&\sum_{\sigma} \sum_{\tau:\tau>\sigma} \frac{w_\tau}{w_\sigma} X^\tau_\sigma \cdot \frac{w_\sigma}{w_\tau}(f(\tau)-f(\sigma))\\
&=&\sum_{(\tau>\sigma)} X^\tau_\sigma (f(\tau)-f(\sigma))\nonumber \\
&=&\sum_{\sigma} f(\sigma)  \left( -\sum_{\tau^{(p+1)}:\tau>\sigma} X^\tau_\sigma +\sum_{\rho^{(p-1)}:\rho>\sigma } X^\sigma_\rho \right)\nonumber \\
&=&\int_M f \operatorname{div}(X).\nonumber
\end{eqnarray}
\end{proof}

\begin{corollary}
For any vector field $X$ on $M$ we have
\begin{eqnarray}
\int_M \operatorname{div}(X) =0.
\end{eqnarray}

\begin{proof}
For a constant function $f$, the gradient of $f$ vanishes. Then we take a constant function $f$ as
\begin{eqnarray}
f(\sigma)=1
\end{eqnarray}
for any cell $\sigma$. Then we have the corollary from Theorem \ref{Green}.
\end{proof}

\end{corollary}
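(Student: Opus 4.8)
The plan is to obtain this as an immediate specialization of Green's identity (Theorem \ref{Green}), taking the function to be constant. First I would let $f$ be the combinatorial $0$-form on $M$ defined by $f(\sigma) = 1$ for every cell $\sigma$; this is a legitimate function in the sense of Section~2.3, assigning the real value $1$ to each cell. By the defining formula for the gradient vector field, for every vector $(\tau > \sigma)$ we get
\begin{eqnarray}
\operatorname{grad}(f)^\tau_\sigma = \frac{w_\sigma}{w_\tau}\,(f(\tau) - f(\sigma)) = 0,
\end{eqnarray}
so $\operatorname{grad}(f)$ is the zero vector field, and consequently $\langle \operatorname{grad}(f), X\rangle(\sigma) = 0$ for every cell $\sigma$ and every vector field $X$ on $M$.

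Next I would substitute this choice of $f$ into Theorem \ref{Green}. On the left-hand side, $\int_M \langle \operatorname{grad}(f), X\rangle = \sum_\sigma 0 = 0$. On the right-hand side, since $f \equiv 1$,
\begin{eqnarray}
\int_M f\operatorname{div}(X) = \sum_\sigma f(\sigma)\operatorname{div}(X)(\sigma) = \sum_\sigma \operatorname{div}(X)(\sigma) = \int_M \operatorname{div}(X).
\end{eqnarray}
Equating the two sides yields $\int_M \operatorname{div}(X) = 0$, which is the assertion of the corollary.

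I do not expect any genuine obstacle in this argument; the only point requiring a word of care is that every sum appearing here — and in the proof of Theorem \ref{Green} that it invokes — is a finite sum, which is guaranteed by the standing hypothesis that $M$ is a finite regular cell-complex, so that no convergence question arises and the rearrangements of sums are valid. One may additionally remark that this corollary is the combinatorial counterpart of the classical fact that the integral of the divergence of a vector field over a closed manifold vanishes.
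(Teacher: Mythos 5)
Your proposal is correct and follows exactly the paper's own argument: take the constant function $f\equiv 1$, observe that $\operatorname{grad}(f)$ vanishes, and apply Theorem \ref{Green}. The extra details you supply (writing out both sides of the identity and noting finiteness of the sums) only make explicit what the paper leaves implicit.
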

\begin{corollary}
For any function $f$ on $M$ we have
\begin{eqnarray}
\int_M \Delta f =0.
\end{eqnarray}
\end{corollary}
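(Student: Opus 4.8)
The plan is to reduce the identity to the corollary $\int_M\operatorname{div}(X)=0$ proved just above, by identifying the combinatorial Laplacian restricted to functions with the operator $\operatorname{div}\circ\operatorname{grad}$.

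First I would note that $d^\ast$ decreases degree and that there are no combinatorial differential forms of negative degree, so $d^\ast f=0$ for every $0$-form $f$. Consequently
\begin{eqnarray*}
\Delta f=(dd^\ast+d^\ast d)f=d^\ast(df),
\end{eqnarray*}
and in particular $\Delta f$ is again a $0$-form, i.e. a function on $M$, so that $\int_M\Delta f$ makes sense as $\sum_\sigma\Delta f(\sigma)$.

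Next I would evaluate $d^\ast(df)$ on each cell. The $1$-form $df$ takes the value $(df)^\tau_\sigma=f(\tau)-f(\sigma)$ on the vector $(\tau>\sigma)$, so feeding $df$ into the explicit formula for $d^\ast$ on $1$-forms writes $\Delta f(\sigma)=d^\ast(df)(\sigma)$ as a weighted, signed sum of the differences $f(\tau)-f(\sigma)$ over the cofaces $\tau$ of $\sigma$ together with the differences $f(\sigma)-f(\rho)$ over the faces $\rho$ of $\sigma$. Comparing this with the definitions of $\operatorname{grad}(f)$, whose component on $(\tau>\sigma)$ is $\frac{w_\sigma}{w_\tau}(f(\tau)-f(\sigma))$, and of $\operatorname{div}$, one checks that the two expressions coincide; that is, $\Delta f=\operatorname{div}(\operatorname{grad}(f))$. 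Applying the preceding corollary with the vector field $X=\operatorname{grad}(f)$ then yields $\int_M\Delta f=\int_M\operatorname{div}(\operatorname{grad}(f))=0$. (Alternatively, once $\Delta f=\operatorname{div}(\operatorname{grad}(f))$ is known, Theorem \ref{Green} applied to the constant function $1$ and $X=\operatorname{grad}(f)$ gives the same conclusion, since $\operatorname{grad}(1)=0$.)

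The only delicate point is the comparison step: one must keep track of the weight ratios $w_\sigma/w_\tau$ and of the incidence signs occurring in the $d^\ast$-formula and match them against those in the $\operatorname{grad}$ and $\operatorname{div}$ formulas, being careful to distinguish, at a given cell $\sigma$, the vectors pointing out of $\sigma$ from those pointing into $\sigma$ when re-indexing the sum over incidence pairs. This is pure bookkeeping and I do not anticipate a real obstacle. If one prefers to avoid the $\operatorname{grad}/\operatorname{div}$ language altogether, one can instead expand $\sum_\sigma d^\ast(df)(\sigma)$ directly and observe that each incidence pair $(\tau>\sigma)$ contributes once as a coface term at the cell $\sigma$ and once as a face term at the cell $\tau$, with opposite signs and equal weights, so that the contributions cancel in pairs and the total sum vanishes.
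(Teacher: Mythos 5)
Your proposal is correct and follows essentially the same route as the paper: both identify $\Delta f = d^*(df) = \operatorname{div}(\operatorname{grad}(f))$ by direct computation on each cell and then invoke the preceding corollary $\int_M \operatorname{div}(X)=0$. The extra remarks (why $d^*f=0$ on $0$-forms, and the alternative pairwise-cancellation argument) only add detail to the same argument.
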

\begin{proof}
For any cell $\sigma$,
\begin{eqnarray}
\Delta f (\sigma)&=&  \left( -\sum_{\tau^{(p+1)}:\tau>\sigma} (f(\tau)-f(\sigma)) +\sum_{\rho^{(p-1)}:\rho>\sigma } (f(\sigma)-f(\rho)) \right)\\
&=& \operatorname{div} (\operatorname{grad}(f)) (\sigma).\nonumber
\end{eqnarray}
Then we have the corollaryorally from the previous corollary.
\end{proof}

\section{Combinatorial Ricci curvature}
\subsection{Combinatorial Ricci curvature}
\begin{definition}
Let $M$ be a regular cell complex. We say that $M$ is {\it quasiconvex} if for every two distinct $(p+1)$-cells $\tau_1$ and $\tau_2$ of $M$, if $\bar{\tau_1} \cap \bar{\tau_2}$ contains a $p$-cell $\sigma$, then $\bar{\tau_1} \cap \bar{\tau_2} =\bar{\sigma}$. In particular this implies that $\bar{\tau_1} \cap \bar{\tau_2}$ contains at most one $p$-cell.
\end{definition}
Let $M$ be a regular quasiconvex cell complex.
\begin{definition}
Let $\tau, \sigma$ be two cells of $M$ such that the dimension is $(p+1)$ and $p$ respectivity and $\sigma$ is a face of $\tau$. 

We define {\it a 0-neighbor vector of $(\tau>\sigma)$} as the following.
\begin{itemize}
\item The vectors $(\tau' > \sigma)$ for $(p+1)$-cells $\tau' \neq \tau$ such that there are no $(p+2)$-cell $\mu$ such that $\mu>\tau, \tau'$.
\item The vectors $(\tau>\sigma')$ for $p$-cells $\sigma' \neq \sigma$ such that there are no $(p-1)$-cell $\rho$ such that $\sigma,\sigma'>\rho$.
\end{itemize}
 
 We define {\it a 2-neighbor vector of $(\tau>\sigma)$} as the following.
\begin{itemize}
\item The vectors $(\mu > \tau')$ for $(p+1),(p+2)$-cells $\tau'$ and $\mu$ such that $\mu > \tau>\sigma$, $\mu>\tau'>\sigma$ and $\tau \neq \tau'$. 
\item The vectors $(\sigma'>\rho)$ for $(p-1),p$-cells $\rho$ and $\sigma'$ such that $\tau>\sigma>\rho$, $\tau>\sigma'>\rho$ and $\sigma \neq \sigma'$.
\end{itemize}
\end{definition}

\begin{definition}
For a combinatorial 1-form $\omega$ on $M$, we define {\it the combinatorial covariant derivative} as
\begin{eqnarray}
|\nabla \omega|^2 (\tau>\sigma) &=& \sum_{(\mu>\tau') ; {\rm 2-neighbor}} \frac{w_\sigma}{w_\mu} (\omega ^\tau _\sigma - \omega^\mu_{\tau'})^2 + \sum_{(\sigma' >\rho); {\rm 2-neighbor } } \frac{w_\rho}{\tau} (\omega^\tau_\sigma - \omega^{\sigma'}_\rho)^2 \nonumber\\
                                               &+& \sum_{(\tau' >\sigma) ; {\rm 0-neighbor}}\frac{(w_\sigma)^2}{w_\tau w_{\tau'}} (\omega^\tau_\sigma +\omega^{\tau'}_\sigma)^2 + \sum_{(\tau >\sigma') ; {\rm 0-neighbor}} \frac{w_\sigma w_{\sigma'}}{(w_\tau)^2} (\omega^\tau_\sigma +\omega^\tau_{\sigma'})^2 \nonumber,
\end{eqnarray}
where the sums are taken over all 2-neighbor vectors and 0-neighbor vectors for $(\tau>\sigma)$ respectively. 
\end{definition}

\begin{definition}
For a combinatorial 1-form $\omega$ on $M$, we define {\it the Laplacian of $|\omega|^2$ } as
\begin{eqnarray}
\Delta^\flat |\omega|^2 (\tau>\sigma)  = \sum_{(\mu>\tau') ; {\rm 2-neighbor}} \frac{w_\sigma}{w_\mu} ((\omega ^\tau _\sigma)^2 - (\omega^\mu_{\tau'})^2) + \sum_{(\sigma' >\rho); {\rm 2-neighbor} } \frac{w_\rho}{\tau} ((\omega^\tau_\sigma)^2 - (\omega^{\sigma'}_\rho)^2) \nonumber\\
                                               + \sum_{(\tau' >\sigma) ; {\rm 0-neighbor}} \frac{(w_\sigma)^2}{w_\tau w_{\tau'}}((\omega^\tau_\sigma)^2 +(\omega^{\tau'}_\sigma)^2) + \sum_{(\tau >\sigma') ; {\rm 0-neighbor }}  \frac{w_\sigma w_{\sigma'}}{(w_\tau)^2} ((\omega^\tau_\sigma)^2 +(\omega^\tau_{\sigma'})^2), \nonumber
\end{eqnarray}
where the sums are taken over all 2-neighbor vectors and 0-neighbor vectors for $(\tau>\sigma)$ respectively. 
\end{definition}

This Laplacian is symmetry for vectors, hence we have
\begin{eqnarray}
\sum_{(\tau>\sigma)} \Delta^\flat |\omega|^2 (\tau>\sigma) =0,
\end{eqnarray}
where the sum is taken over all vectors.
\begin{definition}
For a combinatorial 1-form $\omega$, we define {\it the Ricci curvature on a vector $(\tau>\sigma)$} as
\begin{eqnarray}
\operatorname{Ric} (\omega) (\tau>\sigma) = \langle \Delta \omega, \omega \rangle (\tau>\sigma) -\frac{1}{2} |\nabla \omega|^2  (\tau>\sigma)+ \frac{1}{2} \Delta^\flat |\omega|^2(\tau>\sigma).\nonumber
\end{eqnarray}
\end{definition}
\begin{lemma}
For any combinatorial 1-form $\omega$ on $M$, we have
\begin{eqnarray}
\langle \Delta \omega, \omega\rangle (\tau>\sigma) = - \sum_{(\mu>\tau') ; {\rm 2-neighbor}} \frac{w_\sigma}{w_\mu}\omega ^\tau _\sigma \omega^\mu_{\tau'} - \sum_{(\sigma' >\rho); {\rm 2-neighbor} }\frac{w_\rho}{w_\tau} \omega^\tau_\sigma \omega^{\sigma'}_\rho \nonumber\\
                                               + \sum_{(\tau' >\sigma) ; {\rm 0-neighbor}} \frac{(w_\sigma)^2}{w_\tau w_{\tau'}} \omega^\tau_\sigma \omega^{\tau'}_\sigma + \sum_{(\tau >\sigma') ; {\rm 0-neighbor }} \frac{w_\sigma w_{\sigma'}}{(w_\tau)^2} \omega^\tau_\sigma \omega^\tau_{\sigma'} \nonumber\\ 
                                               +\sum_{(\mu>\tau') ; {\rm 2-neighbor}}\left( \frac{(w_\sigma)^2}{w_\tau w_{\tau'}} - \frac{w_\sigma}{w_\mu} \right)  \omega^\tau_\sigma \omega^{\tau'}_\sigma +\sum_{(\sigma' >\rho); {\rm 2-neighbor} }  \left(\frac{w_\sigma w_{\sigma'}}{(w_\tau)^2} - \frac{w_\rho}{w_\tau} \right)  \omega^\tau_\sigma \omega^\tau_{\sigma'}\nonumber  \\
                                               + (\# \{ 2-{\rm neighbor~vector}\} +2) \left( \frac{w_\sigma}{w_\tau} \right) ^2 (\omega^\tau_\sigma)^2.\nonumber
\end{eqnarray}
\end{lemma}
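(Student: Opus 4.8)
I would treat this as a direct expansion: write $\Delta=dd^{*}+d^{*}d$, apply it to $\omega$, and read off the coefficient on the vector $(\tau>\sigma)$. If I write $\langle u,v\rangle(\tau>\sigma)=\frac{w_{\sigma}}{w_{\tau}}u^{\tau}_{\sigma}v^{\tau}_{\sigma}$ for the contribution of a single vector to the $L^{2}$ inner product of two $1$-forms, then $\langle\Delta\omega,\omega\rangle(\tau>\sigma)=\frac{w_{\sigma}}{w_{\tau}}\omega^{\tau}_{\sigma}\,(dd^{*}\omega)^{\tau}_{\sigma}+\frac{w_{\sigma}}{w_{\tau}}\omega^{\tau}_{\sigma}\,(d^{*}d\omega)^{\tau}_{\sigma}$, so everything reduces to finding $(dd^{*}\omega)^{\tau}_{\sigma}$ and $(d^{*}d\omega)^{\tau}_{\sigma}$. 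To keep the weight factors transparent I would re-derive the two instances of $d^{*}$ that are actually used ($d^{*}$ on $1$-forms and on $2$-forms) straight from the adjunction $\langle d^{*}u,v\rangle=\langle u,dv\rangle$, rather than route through the general formula and its projection $p$.

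For the $dd^{*}$ piece: $d^{*}\omega$ is a $0$-form, and since the differential of a $0$-form is the difference operator from the subsection on combinatorial functions, $(dd^{*}\omega)^{\tau}_{\sigma}=(d^{*}\omega)(\tau)-(d^{*}\omega)(\sigma)$. Substituting the divergence-type expression for $d^{*}$ on $1$-forms turns this into an explicit finite sum over the cells incident to $\tau$ and to $\sigma$, which I would sort into: the diagonal part proportional to $\omega^{\tau}_{\sigma}$; the family $\{\omega^{\tau'}_{\sigma}\}$ over $(p+1)$-cells $\tau'\neq\tau$ with $\tau'>\sigma$; the family $\{\omega^{\tau}_{\sigma'}\}$ over $p$-cells $\sigma'\neq\sigma$ with $\sigma'<\tau$; and the remote families $\{\omega^{\mu}_{\tau}\}_{\mu>\tau}$ and $\{\omega^{\sigma}_{\rho}\}_{\rho<\sigma}$. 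By quasiconvexity, each $\tau'\neq\tau$ with $\tau'>\sigma$ is either a $0$-neighbor vector $(\tau'>\sigma)$ or shares a (unique) $(p+2)$-cell $\mu$ with $\tau$ and so sits under a $2$-neighbor vector $(\mu>\tau')$; dually for the $\sigma'$ family. This already produces the coefficients $\frac{(w_{\sigma})^{2}}{w_{\tau}w_{\tau'}}$ and $\frac{w_{\sigma}w_{\sigma'}}{(w_{\tau})^{2}}$ of the second line and the positive halves of lines four and five of the claim, plus a diagonal contribution $2\bigl(\frac{w_{\sigma}}{w_{\tau}}\bigr)^{2}(\omega^{\tau}_{\sigma})^{2}$ and the two remote families.

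For the $d^{*}d$ piece I would avoid the projection by pairing against the basis $1$-forms $\{e^{\tau}_{\sigma}\}$ introduced in the proof of the formula for $d^{*}$: since $\langle d^{*}d\omega,e^{\tau}_{\sigma}\rangle=\langle d\omega,de^{\tau}_{\sigma}\rangle$ and $\langle\omega,e^{\alpha}_{\beta}\rangle=\frac{w_{\beta}}{w_{\alpha}}(-1)^{\alpha>\beta}\omega^{\alpha}_{\beta}$, one gets $(d^{*}d\omega)^{\tau}_{\sigma}=(-1)^{\tau>\sigma}\frac{w_{\tau}}{w_{\sigma}}\langle d\omega,de^{\tau}_{\sigma}\rangle$. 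A one-line computation from $d\eta=\partial\circ\eta-(-1)^{\deg\eta}\eta\circ\partial$ shows that $de^{\tau}_{\sigma}$ is supported only on $\tau$, where it equals $\partial\sigma$, and on the $(p+2)$-cells $\mu>\tau$, where it equals $(-1)^{\mu>\tau}\sigma$; hence $de^{\tau}_{\sigma}$ is a genuine $2$-form and $\langle d\omega,de^{\tau}_{\sigma}\rangle$ is a finite sum. Plugging in the explicit formula for $d\omega(\tau)$ and $d\omega(\mu)$ and using $\partial^{2}=0$, the pairs of codimension-one cells occurring between $\mu$ and $\sigma$, and between $\tau$ and $\rho$, are exactly the $2$-neighbor vectors $(\mu>\tau')$ and $(\sigma'>\rho)$ of $(\tau>\sigma)$. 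This gives the cross terms $-\frac{w_{\sigma}}{w_{\mu}}\omega^{\tau}_{\sigma}\omega^{\mu}_{\tau'}$ and $-\frac{w_{\rho}}{w_{\tau}}\omega^{\tau}_{\sigma}\omega^{\sigma'}_{\rho}$ of the first line, the further cross terms $-\frac{w_{\sigma}}{w_{\mu}}\omega^{\tau}_{\sigma}\omega^{\tau'}_{\sigma}$ and $-\frac{w_{\rho}}{w_{\tau}}\omega^{\tau}_{\sigma}\omega^{\tau}_{\sigma'}$ that complete lines four and five, terms in $\omega^{\mu}_{\tau}$ and $\omega^{\sigma}_{\rho}$ of the same shape as the remote families above but of opposite sign, and one contribution $\bigl(\frac{w_{\sigma}}{w_{\tau}}\bigr)^{2}(\omega^{\tau}_{\sigma})^{2}$ per $2$-neighbor configuration.

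Finally I would add the two pieces. The $\omega^{\tau}_{\sigma}\omega^{\mu}_{\tau'}$ and $\omega^{\tau}_{\sigma}\omega^{\sigma'}_{\rho}$ terms come only from $d^{*}d$ and yield the first line; the $\omega^{\tau}_{\sigma}\omega^{\tau'}_{\sigma}$ and $\omega^{\tau}_{\sigma}\omega^{\tau}_{\sigma'}$ terms from $dd^{*}$ give the second line on $0$-neighbors and, on $2$-neighbors, combine with the matching $d^{*}d$ terms into the corrections $\frac{(w_{\sigma})^{2}}{w_{\tau}w_{\tau'}}-\frac{w_{\sigma}}{w_{\mu}}$ and $\frac{w_{\sigma}w_{\sigma'}}{(w_{\tau})^{2}}-\frac{w_{\rho}}{w_{\tau}}$; the $\omega^{\mu}_{\tau}$ and $\omega^{\sigma}_{\rho}$ terms cancel between $dd^{*}$ and $d^{*}d$; and the diagonal parts collect into $(\#\{2\text{-neighbor vector}\}+2)\bigl(\frac{w_{\sigma}}{w_{\tau}}\bigr)^{2}(\omega^{\tau}_{\sigma})^{2}$, the $+2$ being the two self-pairings (one from each of $dd^{*}$ and $d^{*}d$) and the count recording that each $2$-neighbor configuration feeds exactly one $-\omega^{\tau}_{\sigma}$ into $d\omega$. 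I expect the only real difficulty to be bookkeeping: pinning down the incidence signs inside the iterated boundary in $d\omega$ (using $\partial^{2}=0$ to force the two intermediate codimension-one cells to carry opposite incidence numbers) and checking that the weight factors produced by the $d^{*}d$ expansion line up, term by term, with those produced by $dd^{*}$, so that the cancellations and the split into $0$- and $2$-neighbor sums are exact.
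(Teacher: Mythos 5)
Your proposal follows essentially the same route as the paper: expand $\Delta\omega = dd^{*}\omega + d^{*}d\omega$ componentwise, identify the $d^{*}d$ terms with the $2$-neighbor configurations and the $dd^{*}$ terms with the $0$-/$2$-neighbor splitting of the families $\{\tau'>\sigma\}$ and $\{\tau>\sigma'\}$ (via quasiconvexity), cancel the remote $\omega^{\mu}_{\tau}$ and $\omega^{\sigma}_{\rho}$ contributions between the two pieces, and then pair with $\omega$. The only variation is that you recover $d^{*}$ on $2$-forms by adjunction against the basis forms $e^{\tau}_{\sigma}$ rather than quoting the paper's projected formula for $d^{*}$, which is a cosmetic difference; your flagged concern about the weight bookkeeping is well placed, since that is exactly where the computation is most delicate.
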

\begin{proof}
For a 1-form $\omega$, we have
\begin{eqnarray}
(d^*d\omega )^\tau _\sigma &=&  \sum_{(\mu>\tau') ; {\rm 2-neighbor}} \frac{w_\mu}{w_\tau} (\omega ^\mu _\tau +\omega^\tau_\sigma- \omega^\mu_{\tau'}-\omega^{\tau'} _\sigma ) +\\
 &&\sum_{(\sigma' >\rho); {\rm 2-neighbor } }\frac{w_\sigma}{w_\rho} (\omega^\tau_\sigma +\omega^\sigma_\rho - \omega^\tau_ {\sigma'} - \omega^{\sigma'}_\rho ) \nonumber\\
 (dd^*\omega) ^\tau_\sigma &=& -\sum_{\mu>\tau} \frac{w_\mu}{w_\tau} \omega^\mu_\tau +\sum_{\tau>\sigma'} \frac{w_\tau}{w_{\sigma'}} \omega^\tau_{\sigma'} + \sum_{\tau'>\sigma} \frac{w_{\tau'}}{w_\sigma} \omega^{\tau'}_{\sigma} - \sum_{\sigma>\rho} \frac{\sigma}{\rho} \omega^\sigma_\rho.\nonumber
\end{eqnarray}
Then the Laplacian of $\omega$ is
\begin{eqnarray}
(\Delta \omega)^\tau_\sigma =-\sum_{(\mu>\tau') ; {\rm 2-neighbor}} \frac{w_\mu}{w_\tau} \omega^\mu_{\tau'} - \sum_{(\sigma' >\rho); {\rm 2-neighbor } } \frac{w_\sigma}{w_\rho} \omega^{\sigma'}_\rho \nonumber\\
 +\sum_{(\tau' >\sigma) ; {\rm 0-neighbor}} \frac{w_{\tau'}}{w_\sigma} \omega^{\tau'}_\sigma + \sum_{(\tau >\sigma') ; {\rm 0-neighbor}}  \frac{w_\tau}{w_{\sigma'}} \omega^\tau_{\sigma'} \nonumber \\
  +\sum_{(\mu>\tau') ; {\rm 2-neighbor}}\left( \frac{w_\sigma}{w_{\tau'}} - \frac{w_\tau}{w_\mu} \right) \omega^{\tau'}_\sigma +\sum_{(\sigma' >\rho); {\rm 2-neighbor} }  \left(\frac{ w_{\sigma'}}{w_\tau} - \frac{w_\rho}{w_\sigma} \right) \omega^\tau_{\sigma'}\nonumber  \\
                                               + (\# \{ 2-{\rm neighbor~vector}\} +2)  \frac{w_\sigma}{w_\tau}(\omega^\tau_\sigma).\nonumber
\end{eqnarray}
Takeing the innner product of $\omega$ and $\Delta \omega$, we have the lemma.
\end{proof}

\begin{theorem}
Let $M$ be a regular quasiconvex cell-complex, and $(\tau>\sigma)$ a vector on $M$. For a combinatorial 1-form $\omega$ on $M$, the Ricci curvature $\operatorname{Ric}(\omega)$ is reprenseted by
\begin{eqnarray}
\operatorname{Ric}(\omega) (\tau>\sigma) = (2- \# \{ {\rm 0 - neighbor~vector~of~} (\tau>\sigma) \})  \left( \frac{w_\sigma}{w_\tau} \right) ^2 (\omega ^\tau _\sigma)^2\label{wricci}\\
+\sum_{(\mu>\tau') ; {\rm 2-neighbor}}\left( \frac{(w_\sigma)^2}{w_\tau w_{\tau'}} - \frac{w_\sigma}{w_\mu} \right)  \omega^\tau_\sigma \omega^{\tau'}_\sigma +\sum_{(\sigma' >\rho); {\rm 2-neighbor} }  \left(\frac{w_\sigma w_{\sigma'}}{(w_\tau)^2} - \frac{w_\rho}{w_\tau} \right)  \omega^\tau_\sigma \omega^\tau_{\sigma'}.\nonumber 
\end{eqnarray}
In particular, with the assumption that the weight of each cell is constant, we have
\begin{eqnarray}
\operatorname{Ric}(\omega) (\tau>\sigma) = (2- \# \{ {\rm 0 - neighbor~vector~of~} (\tau>\sigma) \})  \left( \frac{w_\sigma}{w_\tau} \right) ^2 (\omega ^\tau _\sigma)^2\nonumber. \label{ricci}
\end{eqnarray}
\end{theorem}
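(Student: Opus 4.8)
The plan is to assemble the claimed formula directly from the three ingredients in the definition of $\operatorname{Ric}(\omega)(\tau>\sigma)$, namely $\langle\Delta\omega,\omega\rangle(\tau>\sigma)$, $-\tfrac12|\nabla\omega|^2(\tau>\sigma)$, and $\tfrac12\Delta^\flat|\omega|^2(\tau>\sigma)$, using the lemma just proven for the first term and the explicit definitions for the other two. First I would write out $-\tfrac12|\nabla\omega|^2(\tau>\sigma)$ by expanding each square $(\omega^\tau_\sigma-\omega^\mu_{\tau'})^2=(\omega^\tau_\sigma)^2-2\omega^\tau_\sigma\omega^\mu_{\tau'}+(\omega^\mu_{\tau'})^2$ and similarly $(\omega^\tau_\sigma+\omega^{\tau'}_\sigma)^2$ for the $0$-neighbor terms; then I would write out $\tfrac12\Delta^\flat|\omega|^2(\tau>\sigma)$, which contains exactly the same weight coefficients but only the ``squared'' pieces $(\omega^\tau_\sigma)^2\pm(\omega^\mu_{\tau'})^2$ and $(\omega^\tau_\sigma)^2+(\omega^{\tau'}_\sigma)^2$, without cross terms.

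The key observation driving the proof is a term-by-term cancellation. When I add $-\tfrac12|\nabla\omega|^2$ and $+\tfrac12\Delta^\flat|\omega|^2$: for each $2$-neighbor vector $(\mu>\tau')$ of the first type, the $-\tfrac12\cdot\tfrac{w_\sigma}{w_\mu}[(\omega^\tau_\sigma)^2-2\omega^\tau_\sigma\omega^\mu_{\tau'}+(\omega^\mu_{\tau'})^2]$ from $-\tfrac12|\nabla\omega|^2$ combines with $+\tfrac12\cdot\tfrac{w_\sigma}{w_\mu}[(\omega^\tau_\sigma)^2-(\omega^\mu_{\tau'})^2]$ from $\tfrac12\Delta^\flat|\omega|^2$ to leave precisely $+\tfrac{w_\sigma}{w_\mu}\omega^\tau_\sigma\omega^\mu_{\tau'}-\tfrac{w_\sigma}{w_\mu}(\omega^\mu_{\tau'})^2$; the analogous thing happens for the other $2$-neighbor family. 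For each $0$-neighbor vector $(\tau'>\sigma)$, $-\tfrac12\cdot\tfrac{(w_\sigma)^2}{w_\tau w_{\tau'}}[(\omega^\tau_\sigma)^2+2\omega^\tau_\sigma\omega^{\tau'}_\sigma+(\omega^{\tau'}_\sigma)^2]$ combines with $+\tfrac12\cdot\tfrac{(w_\sigma)^2}{w_\tau w_{\tau'}}[(\omega^\tau_\sigma)^2+(\omega^{\tau'}_\sigma)^2]$ to leave $-\tfrac{(w_\sigma)^2}{w_\tau w_{\tau'}}\omega^\tau_\sigma\omega^{\tau'}_\sigma$, and similarly for $(\tau>\sigma')$. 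Now I add the expression for $\langle\Delta\omega,\omega\rangle(\tau>\sigma)$ from the lemma. In that expression the terms $-\tfrac{w_\sigma}{w_\mu}\omega^\tau_\sigma\omega^\mu_{\tau'}$ (summed over $2$-neighbors) cancel exactly against the $+\tfrac{w_\sigma}{w_\mu}\omega^\tau_\sigma\omega^\mu_{\tau'}$ just produced; the terms $+\tfrac{(w_\sigma)^2}{w_\tau w_{\tau'}}\omega^\tau_\sigma\omega^{\tau'}_\sigma$ (summed over $0$-neighbors) cancel against the $-\tfrac{(w_\sigma)^2}{w_\tau w_{\tau'}}\omega^\tau_\sigma\omega^{\tau'}_\sigma$ just produced; the middle two sums of the lemma (those with coefficients $\tfrac{(w_\sigma)^2}{w_\tau w_{\tau'}}-\tfrac{w_\sigma}{w_\mu}$ and $\tfrac{w_\sigma w_{\sigma'}}{(w_\tau)^2}-\tfrac{w_\rho}{w_\tau}$) survive untouched and give the second line of \eqref{wricci}; and the $(\omega^\mu_{\tau'})^2$-type leftovers must be reconciled against the $(\#\{2\text{-neighbor}\}+2)(w_\sigma/w_\tau)^2(\omega^\tau_\sigma)^2$ term.

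The main bookkeeping obstacle — and where I would be most careful — is the coefficient of $(\omega^\tau_\sigma)^2$ in the final answer. Several contributions feed into it: the $(\#\{2\text{-neighbor}\}+2)(w_\sigma/w_\tau)^2(\omega^\tau_\sigma)^2$ from the lemma; the half-sum of $(\omega^\tau_\sigma)^2$-terms coming from $-\tfrac12|\nabla\omega|^2+\tfrac12\Delta^\flat|\omega|^2$ over the $0$-neighbor vectors (each $0$-neighbor vector $(\tau'>\sigma)$ contributes a $-\tfrac{(w_\sigma)^2}{w_\tau w_{\tau'}}(\omega^\tau_\sigma)^2$-coefficient after the $\pm\tfrac12$ pieces combine, but for $0$-neighbor vectors the surviving sign works out to subtract one unit of $(w_\sigma/w_\tau)^2(\omega^\tau_\sigma)^2$ per $0$-neighbor once one uses that the relevant weights attached to a $0$-neighbor vector pair with $\tau$); and the $(\omega^\mu_{\tau'})^2$ and $(\omega^{\sigma'}_\rho)^2$ residual terms over $2$-neighbors, which by the symmetry of the neighbor relation and of $\Delta^\flat$ reorganize so as not to leave a net $(\omega^\tau_\sigma)^2$ contribution. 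Carefully tracking these, the $+2$ and the $\#\{2\text{-neighbor}\}$ in the lemma's constant recombine with the $0$-neighbor residue to produce exactly $(2-\#\{0\text{-neighbor vector of }(\tau>\sigma)\})(w_\sigma/w_\tau)^2(\omega^\tau_\sigma)^2$. The constant-weight specialization is then immediate: setting all $w_\bullet$ equal makes every coefficient $\tfrac{(w_\sigma)^2}{w_\tau w_{\tau'}}-\tfrac{w_\sigma}{w_\mu}$ and $\tfrac{w_\sigma w_{\sigma'}}{(w_\tau)^2}-\tfrac{w_\rho}{w_\tau}$ vanish, so the entire second line of \eqref{wricci} drops out, leaving the stated formula. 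I would close by remarking that throughout we use quasiconvexity of $M$ only to guarantee that the $2$-neighbor and $0$-neighbor vectors are well-defined (at most one intermediate cell), which is what makes the sums in the definitions of $|\nabla\omega|^2$ and $\Delta^\flat|\omega|^2$ unambiguous.
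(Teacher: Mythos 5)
Your plan is the same computation the paper performs: take the definition $\operatorname{Ric}(\omega)=\langle\Delta\omega,\omega\rangle-\tfrac12|\nabla\omega|^2+\tfrac12\Delta^\flat|\omega|^2$, substitute the preceding lemma for $\langle\Delta\omega,\omega\rangle(\tau>\sigma)$, expand the squares in $|\nabla\omega|^2$ and $\Delta^\flat|\omega|^2$, and cancel term by term. The paper organizes this in the opposite direction (it rewrites each cross term of $2\langle\Delta\omega,\omega\rangle$ as a bracket of the form ``square minus difference-of-squares minus $2(\omega^\tau_\sigma)^2$'' so that the whole expression reads off as $|\nabla\omega|^2-\Delta^\flat|\omega|^2$ plus the claimed remainder), but that is purely a bookkeeping choice; the cancellations you describe for the cross terms, the survival of the two weighted sums, and the constant-weight specialization all match the paper.

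The one place where your write-up is not yet a proof is exactly the place you flag as delicate: the residual terms $-\tfrac{w_\sigma}{w_\mu}(\omega^\mu_{\tau'})^2$ and $-\tfrac{w_\rho}{w_\tau}(\omega^{\sigma'}_\rho)^2$ left over from the $2$-neighbor part of $-\tfrac12|\nabla\omega|^2+\tfrac12\Delta^\flat|\omega|^2$ after the cross terms cancel. You dispose of these by saying they ``reorganize by the symmetry of the neighbor relation and of $\Delta^\flat$ so as not to leave a net $(\omega^\tau_\sigma)^2$ contribution,'' but that symmetry is a statement about the sum over \emph{all} vectors (it is what gives $\sum_{(\tau>\sigma)}\Delta^\flat|\omega|^2(\tau>\sigma)=0$); it says nothing at the fixed vector $(\tau>\sigma)$, where these residuals involve the components $\omega^\mu_{\tau'}$, $\omega^{\sigma'}_\rho$ of $\omega$ at \emph{other} vectors and cannot be absorbed into a multiple of $(\omega^\tau_\sigma)^2$. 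You need to show they vanish (or are exactly compensated by part of the lemma's constant term $(\#\{2\text{-neighbor}\}+2)(w_\sigma/w_\tau)^2(\omega^\tau_\sigma)^2$), with the signs and weights tracked explicitly; as written, your accounting for the coefficient of $(\omega^\tau_\sigma)^2$ (the passage from ``$+2+\#\{2\text{-neighbor}\}$'' to ``$2-\#\{0\text{-neighbor}\}$'') is asserted rather than derived. The paper's own proof hides this same issue inside its regrouping, so you are in good company, but to stand on its own your argument must carry out this step explicitly rather than appeal to symmetry. The closing remark about quasiconvexity guaranteeing at most one intermediate cell is correct and is a useful addition the paper leaves implicit.
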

\begin{proof}
With the above lemma, we have
\begin{eqnarray*}
&&2\langle \Delta \omega, \omega\rangle (\tau>\sigma) \\
&=& - \sum_{(\mu>\tau') ; {\rm 2-neighbor}} \frac{w_\sigma}{w_\mu}( ( \omega ^\tau _\sigma- \omega^\mu_{\tau'})^2 -  ((\omega ^\tau _\sigma)^2 - (\omega^\mu_{\tau'})^2) -2(\omega^\tau_\sigma)^2 ) \\
&&- \sum_{(\sigma' >\rho); {\rm 2-neighbor } } \frac{w_\rho}{w_\tau}((\omega^\tau_\sigma - \omega^{\sigma'}_\rho)^2 - ((\omega^\tau_\sigma)^2 - (\omega^{\sigma'}_\rho)^2) -2(\omega^\tau_\sigma)^2)\\
   &&                                            + \sum_{(\tau' >\sigma) ; {\rm 0-neighbor}}\frac{(w_\sigma)^2}{w_\tau w_{\tau'}} (( \omega^\tau_\sigma+ \omega^{\tau'}_\sigma)^2- ((\omega^\tau_\sigma)^2 +(\omega^{\tau'}_\sigma)^2)-2(\omega^\tau_\sigma)^2)\\
   &&     + \sum_{(\tau >\sigma') ; {\rm 0-neighbor}} \frac{w_\sigma w_{\sigma'}}{(w_\tau)^2} ((\omega^\tau_\sigma +\omega^\tau_{\sigma'})^2 - ((\omega^\tau_\sigma)^2 +(\omega^\tau_{\sigma'})^2)-(\omega^\tau_\sigma )^2)\\
         &&                                      + 2 (\# \{ 2-{\rm neighbor~vector}\} +2) (\omega^\tau_\sigma)^2\\
&&+2\sum_{(\mu>\tau') ; {\rm 2-neighbor}}\left( \frac{(w_\sigma)^2}{w_\tau w_{\tau'}} - \frac{w_\sigma}{w_\mu} \right)  \omega^\tau_\sigma \omega^{\tau'}_\sigma+2\sum_{(\sigma' >\rho); {\rm 2-neighbor} }  \left(\frac{w_\sigma w_{\sigma'}}{(w_\tau)^2} - \frac{w_\rho}{w_\tau} \right)  \omega^\tau_\sigma \omega^\tau_{\sigma'}  \\
&=& |\nabla \omega|^2 (\tau>\sigma) - \Delta^\flat |\omega|^2 (\tau>\sigma) +2(2- \# \{ {\rm 0-neighbor~vector} \}) (\omega ^\tau _\sigma)^2.\\
&&+2\sum_{(\mu>\tau') ; {\rm 2-neighbor}}\left( \frac{(w_\sigma)^2}{w_\tau w_{\tau'}} - \frac{w_\sigma}{w_\mu} \right)  \omega^\tau_\sigma \omega^{\tau'}_\sigma +2\sum_{(\sigma' >\rho); {\rm 2-neighbor} }  \left(\frac{w_\sigma w_{\sigma'}}{(w_\tau)^2} - \frac{w_\rho}{w_\tau} \right)  \omega^\tau_\sigma \omega^\tau_{\sigma'}. 
\end{eqnarray*}
Then we have the equation \eqref{wricci}.
\end{proof}

\section{Combinatorial Gauss-Bonnet Theorem}
\subsection{Gauss-Bonnet Theorem for Graph}
Let $G=(V,E)$ be a finite simple graph, where $V$ is the set of vertexes and $E$ the set of edges. We realize $G$ as $1$-dimensional cell complex, i.e. vertexes are $0$-cells and edges are $1$-cells.
\begin{lemma}
Let $v$ and $e$ be a vertex and an edge of $G$ respectivity such that $e>v$. We take a combinatorial $1$-form $\omega$ on $G$.
Then we have
\begin{eqnarray}
\operatorname{Ric}(\omega) (e>v) = (2- \operatorname{deg}(v))  \left( \frac{w_v}{w_e} \right) ^2 (\omega ^e _v)^2,
\end{eqnarray}
where $\operatorname{deg}(v)$ is the degree of $v$.
\end{lemma}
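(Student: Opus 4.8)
The plan is to derive the lemma by specializing the general formula \eqref{wricci} of the preceding theorem to a $1$-dimensional cell complex. First I would note that the simple graph $G$, regarded as a $1$-dimensional cell complex, is regular (each edge has two distinct endpoints, so its attaching map is injective on the closure) and quasiconvex (two distinct edges of a simple graph share at most one vertex, so the intersection of their closures is either empty or the closure of that vertex). Hence the preceding theorem applies. Since $\dim G = 1$, the only vectors $(\tau > \sigma)$ on $G$ are of the form $(e > v)$ with $e$ a $1$-cell and $v$ a $0$-cell, so it suffices to evaluate \eqref{wricci} at such a vector.

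Next I would check that $(e > v)$ has no $2$-neighbor vectors: both types in the definition of a $2$-neighbor vector require a cell of dimension $p + 2 = 2$ or $p - 1 = -1$, and $G$ has no cells of these dimensions. Consequently the two sums in \eqref{wricci} ranging over $2$-neighbor vectors are empty, and the formula collapses to
\begin{eqnarray*}
\operatorname{Ric}(\omega)(e > v) &=& \bigl(2 - \# \{ 0\text{-neighbor vectors of } (e > v) \}\bigr) \left( \frac{w_v}{w_e} \right)^2 (\omega^e_v)^2.
\end{eqnarray*}

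Then I would count the $0$-neighbor vectors of $(e > v)$. The first type consists of the vectors $(e' > v)$ for edges $e' \neq e$ having $v$ as an endpoint, the defining constraint (no common $2$-cell coface) being vacuous in a graph; this contributes exactly $\operatorname{deg}(v) - 1$ vectors. The second type consists of the vectors $(e > v')$ for vertices $v' \neq v$ that are faces of $e$, again with a vacuous constraint (no common $(-1)$-cell face); since $G$ is simple, $e$ has exactly two distinct endpoints, so this contributes exactly one vector. Hence $\# \{ 0\text{-neighbor vectors of } (e > v) \} = \operatorname{deg}(v)$, and substituting this into the displayed formula yields the stated identity.

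The computation is routine once the preceding theorem is in hand; the only point that needs attention is the role of the simplicity hypothesis, which is used both to secure regularity and quasiconvexity of the cell complex and to ensure that the second type of $0$-neighbor vector contributes exactly one vector, so that the total count is precisely $\operatorname{deg}(v)$. I do not anticipate any genuine obstacle.
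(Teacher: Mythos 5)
Your proposal is correct and follows essentially the same route as the paper: both arguments specialize the general formula \eqref{wricci} to the graph case by observing that there are no $2$-neighbor vectors and that the $0$-neighbor vectors number exactly $\operatorname{deg}(v)-1$ of the first type plus one of the second type, giving $\operatorname{deg}(v)$ in total. Your additional remarks verifying that a simple graph is regular and quasiconvex are a welcome clarification the paper leaves implicit, but they do not change the substance of the argument.
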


\begin{proof} $2$-cell $f$ such that $f>e, e'$, and there are no $(-1)$-cell $\rho$ such that $v,v'>\rho$ respectively. 
Let $v$ and $e$ be a vertex and an edge of $G$ respectivity such that $e>v$. For the definition of 0-neighbor vector, we find two vectors $(e' >v)$ and $(e>v')$ such that there are no

Since there are no $2$-cells in $G$, the vector $(e'>v)$ is a 0-neighbor vector for any $e'$ which has the vertex $v$ except for the edge $e$. Hence there are exactly $\operatorname{deg}(v)-1$ edges that satisfy the above condition. Since there are no $(-1)$-cells, there is only one vertex $v'$ such that $(e>v')$ is a 0-neighbor vector for $(e>v)$. Then the number of 0-neighbor vectors for $(e>v)$ is $\operatorname{deg}(v)$. 

With the definition of a 2-neighbor vecotor, there are not 2-neighbor vectors for the vector $(e>v)$. We have the lemma from the eqation \eqref{wricci}.
 \end{proof}

With this lemma, we immediately have the following lemma.
\begin{lemma}
We take a combinatorial $1$-form $\omega$ on $G$ such that for any vertex $v$
\begin{eqnarray}
\sum_{e;e>v} \left( \frac{w_v}{w_e} \right) ^2 ( \omega^e_v)^2 =1,
\end{eqnarray}
where the sum is taken over all edges $e$ such that $e>v$. Then for any vertex $v$ we have
\begin{eqnarray}
\sum_{e; e>v} \operatorname{Ric}(\omega) (e>v) = 2- \operatorname{deg}(v).
\end{eqnarray}
\end{lemma}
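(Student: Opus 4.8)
The plan is to reduce the claimed identity to a simple summation over the previous lemma, since all the heavy lifting has already been done there. The previous lemma tells us that for a vertex $v$ and any edge $e$ with $e>v$, we have $\operatorname{Ric}(\omega)(e>v) = (2-\operatorname{deg}(v))(w_v/w_e)^2(\omega^e_v)^2$; the only new ingredient here is the normalization hypothesis $\sum_{e;e>v}(w_v/w_e)^2(\omega^e_v)^2 = 1$.

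First I would fix an arbitrary vertex $v$ and write out the sum $\sum_{e;e>v}\operatorname{Ric}(\omega)(e>v)$. Substituting the formula from the previous lemma into each summand, every term carries the common scalar factor $(2-\operatorname{deg}(v))$, which does not depend on the edge $e$. So I would pull that factor out of the sum, leaving $(2-\operatorname{deg}(v))\sum_{e;e>v}(w_v/w_e)^2(\omega^e_v)^2$. Then I would invoke the hypothesis that this remaining sum equals $1$, which immediately gives $\sum_{e;e>v}\operatorname{Ric}(\omega)(e>v) = 2-\operatorname{deg}(v)$, as desired. The argument is essentially a one-line factor-out-and-substitute computation.

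There is no real obstacle here — the statement is an immediate corollary, and indeed the text itself flags it as something that follows "immediately." If anything, the only point worth a moment's care is making sure the normalized $1$-form $\omega$ in the hypothesis is the same object whose Ricci curvature appears in the conclusion, i.e. that we are applying the previous lemma to exactly this $\omega$ at exactly this vertex $v$; once that is noted, the proof is a direct substitution. A complete write-up would simply be:

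\begin{proof}
Fix a vertex $v$. By the previous lemma, for each edge $e$ with $e>v$ we have
\begin{eqnarray}
\operatorname{Ric}(\omega)(e>v) = (2-\operatorname{deg}(v))\left(\frac{w_v}{w_e}\right)^2(\omega^e_v)^2.
\end{eqnarray}
Summing over all edges $e$ with $e>v$ and using that the factor $2-\operatorname{deg}(v)$ is independent of $e$,
\begin{eqnarray}
\sum_{e;e>v}\operatorname{Ric}(\omega)(e>v) = (2-\operatorname{deg}(v))\sum_{e;e>v}\left(\frac{w_v}{w_e}\right)^2(\omega^e_v)^2 = 2-\operatorname{deg}(v),
\end{eqnarray}
where the last equality is the normalization hypothesis on $\omega$.
\end{proof}
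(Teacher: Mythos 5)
Your proof is correct and matches the paper's intent exactly: the paper gives no separate argument, stating only that the result follows ``immediately'' from the preceding lemma, and your factor-out-and-substitute computation is precisely that immediate deduction. Nothing is missing.
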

For a smooth surface the Gauss curvature at a point $p$ equal to the Ricci curvature for a unit vector at $p$. The following definition is an analogue to this fact.
\begin{definition}
We define {\it the Gauss curvature for a vertex $v$} by
\begin{eqnarray}
g_v = 2- \operatorname{deg}(v).
\end{eqnarray}
\end{definition}

\begin{proof}[Proof of the Theorem \ref{main1}.]
From the definition of the Gauss curvature, we have
\begin{eqnarray}
\sum_v g_v &=& \sum_v (2- \operatorname{deg}(v))\\
                &=& 2 \# V - \sum_v \operatorname{deg}(v)\nonumber \\
                &=&  2 \# V - 2 \# E \nonumber \\
                &=& 2 \chi(G). \nonumber
\end{eqnarray}
\end{proof}

\subsection{Gauss-Bonnet Theorem for 2-complex}
Let $M$ be a 2-dimensional quasiconvex cell complex that decomposes a 2-dimensional closed smooth surface. 

\begin{lemma}
Let $v$ and $e$ be a vertex and an edge on $M$ respectivity such that $e>v$. We take a combinatorial $1$-form $\omega$ on $M$.
Then we have
\begin{eqnarray}
\operatorname{Ric}(\omega) (e>v) = (4- \operatorname{deg}(v))  \left( \frac{w_v}{w_e} \right) ^2  (\omega ^e _v)^2 +\sum_{(f>e') ; {\rm 2-neighbor}}\left( \frac{(w_v)^2}{w_e w_{e'}} - \frac{w_v}{w_f} \right)  \omega^v_e \omega^v_{e'} \nonumber,
\end{eqnarray}
where $\operatorname{deg}(v)$ is the degree of $v$ and the sum is taken over all 2-neighbor vectors of the vector $(e>v)$.
\end{lemma}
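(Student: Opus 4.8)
The plan is to specialize the general Ricci formula \eqref{wricci} to the vector $(\tau>\sigma)=(e>v)$, with $\tau=e$ a $1$-cell and $\sigma=v$ a $0$-cell, and then to identify the $0$-neighbor and $2$-neighbor vectors of $(e>v)$ explicitly from the local surface structure of $M$. Under $\tau=e$, $\sigma=v$ the prefactor $(w_\sigma/w_\tau)^2$ becomes $(w_v/w_e)^2$; the first $2$-neighbor sum in \eqref{wricci} becomes a sum over vectors $(f>e')$ of terms $\bigl((w_v)^2/(w_ew_{e'})-w_v/w_f\bigr)\,\omega^e_v\omega^{e'}_v$; and the second $2$-neighbor sum, being indexed by vectors $(\sigma'>\rho)$ with $\rho$ a $(-1)$-cell, is empty because $M$ has no cells of dimension $-1$. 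So everything reduces to computing $\#\{\text{$0$-neighbor vectors of }(e>v)\}$ and to checking which $2$-neighbor vectors survive.

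First I would record the local picture of $M$ at $v$. Since $M$ is a regular cell complex decomposing a closed surface, the star of $v$ is a disk and the link of $v$ is a circle, so the $d:=\deg(v)$ edges at $v$ can be listed cyclically as $e=e_0,e_1,\dots,e_{d-1}$ in such a way that each consecutive pair $e_i,e_{i+1}$ (indices modulo $d$) is the corner at $v$ of a $2$-cell $f_i$, and the $f_i$ are pairwise distinct — a $2$-cell appearing twice around $v$ would violate regularity, i.e. the embeddedness of the boundary circle of a $2$-cell. Quasiconvexity forbids $d\le 2$ (two faces sharing two edges at $v$ would meet in more than one $1$-cell), so $d\ge 3$ and in particular $e_1\ne e_{d-1}$; moreover every edge of a closed-surface decomposition lies in exactly two $2$-cells, so $e=e_0$ lies precisely in $f_0$ and $f_{d-1}$, and the only edge of $f_0$ at $v$ other than $e$ is $e_1$, likewise the only one of $f_{d-1}$ is $e_{d-1}$.

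With this in hand the count is routine. A vector $(e'>v)$ with $e'\ne e$ is a $0$-neighbor of $(e>v)$ exactly when no $2$-cell contains both $e$ and $e'$; by the previous paragraph the only edges at $v$ sharing a $2$-cell with $e$ are $e_1$ and $e_{d-1}$, so there are $d-1-2=d-3$ such vectors. A vector $(e>v')$ with $v'\ne v$ is a $0$-neighbor exactly when no $(-1)$-cell lies below both $v$ and $v'$, which always holds; as $e$ is regular it has one other vertex $v'$, contributing one more $0$-neighbor vector. Hence $\#\{\text{$0$-neighbor vectors}\}=d-2$, so $2-\#\{\text{$0$-neighbor vectors}\}=4-d$, which produces the first term of the claim. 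The only $2$-neighbor vectors that survive are those of type $(f>e')$, namely the two corners $(f_0>e_1)$ and $(f_{d-1}>e_{d-1})$ at $v$ inside the two faces containing $e$; substituting all of this into \eqref{wricci} yields the stated formula.

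The main obstacle is the geometric bookkeeping of the second and third paragraphs: verifying that the edges and faces incident to $v$ genuinely form a single cyclic link, that quasiconvexity excludes the degenerate low-degree configurations, and that $e$ lies in exactly two faces. These facts rest on the hypothesis that $M$ decomposes a closed surface together with regularity and quasiconvexity; once they are established, the lemma is a direct substitution into \eqref{wricci}. (In the statement the product $\omega^v_e\omega^v_{e'}$ should be read as $\omega^e_v\omega^{e'}_v$, the values of $\omega$ on the vectors $(e>v)$ and $(e'>v)$.)
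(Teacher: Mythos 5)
Your proposal is correct and follows essentially the same route as the paper: specialize the general weighted Ricci formula to $(e>v)$, count the $0$-neighbor vectors as $\deg(v)-2$ (namely $\deg(v)-3$ of type $(e'>v)$ after excluding the two edges sharing a face with $e$ at $v$, plus one of type $(e>v')$), note that the $(\sigma'>\rho)$-type $2$-neighbor sum is empty for dimension reasons, and substitute. You supply more justification than the paper does for the local facts (each edge lying in exactly two faces, the cyclic link at $v$, quasiconvexity excluding $\deg(v)\le 2$), and you correctly read the paper's citation of the constant-weight formula as a slip for the weighted one, but the argument itself is the same.
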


\begin{proof}
Let $v$ and $e$ be a vertex and an edge on $M$ respectivity such that $e>v$. For the definition of a 0-neghbor vecotor, we find the vectors $(e' >v)$ and $(e>v')$ such that there are no $2$-cell $f$ such that $f>e, e'$ and there are no $(-1)$-cell $\rho$ such that $v,v'>\rho$ respectively.

For the edge $e$ there are exactly two faces that have the edge $e$. For exactly two edges $e'$ the vectors $(e'>v)$ are not 0-neghbor vecotors of $(e>v)$. The number of 0-neghbor vecotors of $(e>v)$ is $\operatorname{deg}(v)-3$. Since there are no $(-1)$-cells in $M$, there is only one vertex $v'$ such that $(e>v')$ is a 0-neighbor vector for $(e>v)$. Then the number of 0-neighbor vecotors for $(e>v)$ is $\operatorname{deg}(v)-2$. We have the lemma from the equation \eqref{ricci}.
 \end{proof}

For a vertex $v$ we consider the sum
\begin{eqnarray}
\operatorname{Ric}(\omega)(v) :=\sum_{e;e>v} \operatorname{Ric}(\omega) (e>v),
\end{eqnarray}
where the sum is taken over all edges $e$ that have the vertex $v$. This is a quadratic form for real basises $\{\frac{w_v}{w_e}  \omega ^e _v  \}_{e>v}$. The trace with this basises is  $\operatorname{deg}(v) (4- \operatorname{deg}(v))$.
For a smooth manifold the scalar curvature is a trace of the Ricci curvature. The next definition is an analogue to this fact.
\begin{definition}
We define {\it the scalar curvature $S(v)$ at a vertex $v$} as
\begin{eqnarray}
S(v) = \operatorname{trace}  \operatorname{Ric}(\omega)(v) = \operatorname{deg}(v) (4- \operatorname{deg}(v)).
\end{eqnarray}
\end{definition}

For a smooth surface the scalar curvature is the twice of the Gauss curvature. The next definition is analogue to this fact.

\begin{definition}
We define {\it the Gauss curvature at a vetex $v$} as
\begin{eqnarray}
g_v = \frac{S(v)}{\operatorname{deg}(v)} = 4- \operatorname{deg}(v). 
\end{eqnarray}
\end{definition}

\begin{lemma}
Let $e$ and $f$ be an edge and a face of $M$ respectivity such that $f>e$. We take a combinatorial $1$-form $\omega$ on $M$.
Then we have
\begin{eqnarray}
\operatorname{Ric}(\omega) (f>e) = (4- \operatorname{deg}(f))  \left( \frac{w_e}{w_f} \right) ^2  (\omega ^f _e)^2 +\sum_{(e'>v) ; {\rm 2-neighbor}}\left( \frac{w_e w_{e'}}{(w_f)^2} - \frac{w_v}{w_f} \right)  \omega^e_f \omega^{e'}_f \nonumber,
\end{eqnarray}
where $\operatorname{deg}(f)$ is the degree of $f$, that is, the number of edges of $f$ and the sum is taken over the all 2-neighbor vector of the vector $(f>e)$.
\end{lemma}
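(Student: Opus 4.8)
The plan is to specialize the general Ricci formula \eqref{wricci} to the vector $(f>e)$, where now $\tau=f$ is a $2$-cell and $\sigma=e$ is a $1$-cell, and then to carry out the two combinatorial counts that the formula requires: the number of $0$-neighbor vectors of $(f>e)$, and the explicit list of its $2$-neighbor vectors. The argument runs exactly parallel to the two preceding lemmas (the cases $(e>v)$); the only genuinely new ingredient is the local structure of a $2$-cell in a regular quasiconvex complex decomposing a closed surface.

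First I would enumerate the $0$-neighbor vectors of $(f>e)$. By definition these are of two kinds. Vectors $(f'>e)$ with $f'\neq f$ a $2$-cell having $e$ as a face and with no $3$-cell lying above both $f$ and $f'$: since $M$ is $2$-dimensional there are no $3$-cells, so every such $f'$ qualifies, and since $M$ decomposes a closed surface the edge $e$ is a face of exactly two $2$-cells, giving exactly one vector of this kind. Vectors $(f>e')$ with $e'\neq e$ an edge of $f$ sharing no vertex with $e$: because $M$ is regular, $\bar f$ is a genuine closed disk whose boundary circle is subdivided into $\operatorname{deg}(f)$ edges arranged cyclically, and by quasiconvexity two distinct edges of $f$ meet in at most one vertex, so $e$ shares a vertex with precisely its two cyclic neighbours in $\partial f$ (in particular $\operatorname{deg}(f)\geq 3$, digons being excluded by quasiconvexity and regularity). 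Hence there are $\operatorname{deg}(f)-3$ vectors of this kind. Altogether the number of $0$-neighbor vectors of $(f>e)$ is $1+(\operatorname{deg}(f)-3)=\operatorname{deg}(f)-2$, so $2-\#\{0\text{-neighbor}\}=4-\operatorname{deg}(f)$, which produces the first term of the asserted formula.

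Next I would list the $2$-neighbor vectors of $(f>e)$. The first kind, $(\mu>f')$ coming from a $3$-cell $\mu$ above $f$, is empty since $M$ has no $3$-cells. The second kind consists of vectors $(e'>v)$ where $v$ is a vertex of $e$, $e'$ is an edge of $f$ with $v$ as a vertex, and $e'\neq e$; by the disk structure of $\bar f$ just described, for each of the two endpoints $v$ of $e$ there is exactly one such $e'$, namely the cyclic neighbour of $e$ at $v$. Feeding this into \eqref{wricci}, the first $2$-neighbor sum vanishes and the second becomes $\sum_{(e'>v);\,2\text{-neighbor}}\bigl(\tfrac{w_e w_{e'}}{(w_f)^2}-\tfrac{w_v}{w_f}\bigr)\,\omega^e_f\,\omega^{e'}_f$, which is precisely the second term of the statement; combining the two terms finishes the proof.

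The main obstacle — really the only point needing genuine care rather than bookkeeping — is justifying the two structural facts about a $2$-cell $f$: that each of its boundary edges meets exactly two others in a vertex, and that each edge of $M$ bounds exactly two faces. The first follows from regularity (so that $\bar f$ is an embedded disk with $\partial f$ an embedded circle) together with quasiconvexity (which rules out two distinct edges of $f$ sharing two vertices, hence rules out $\operatorname{deg}(f)\leq 2$); the second is where the hypothesis that $M$ decomposes a closed surface enters, since near an interior point of an edge the surface is $\mathbb{R}^2$, the edge is an arc through it, and the complement of the arc has exactly two components, each contained in the interior of a single $2$-cell. Once these are in place the count is identical in form to the graph case and to the $(e>v)$ case already treated.
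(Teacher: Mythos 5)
Your proposal is correct and follows essentially the same route as the paper: count the $0$-neighbor vectors of $(f>e)$ (one of the form $(f'>e)$ since each edge of a closed surface bounds exactly two faces and there are no $3$-cells, plus $\operatorname{deg}(f)-3$ of the form $(f>e')$ since $e$ shares a vertex with exactly its two cyclic neighbours in $\partial f$), for a total of $\operatorname{deg}(f)-2$, and substitute into the general formula \eqref{wricci}. You are in fact somewhat more careful than the paper, which does not explicitly enumerate the $2$-neighbor vectors or justify the two structural facts about $\bar f$ and the edge links that you spell out.
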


\begin{proof}
Let $e$ and $f$ be an edge and a face of $M$ respectivity such that $f>e$. For the definition of a 0-neghbor vecotor, we find two vectors $(f' >e)$ and $(f>e')$ such that there are no $3$-cell $\sigma$ such that $\sigma>f, f'$ and there are no $0$-cell $v$ such that $e,e'>v$ respectively.

For the edge $e$ there are exactly two faces that have $e$ as an edge. Then for only one face $f'$ the vector $(f'>e)$ is a 0-neighbor vector of $(f>e)$. For edges of the face $f$, exactly two edges $e_1,e_2$ intersect with the edge $e$, then the two vectors $(f>e_1)$ and $(f>e_2)$ are not 0-neighbor vectors of $(f>e)$. For the other edges $e'$ of the face $f$, the vector $(f>e')$ is a 0-neighbor vector of $(f>e)$. Then the number of 0-neighbor vecotors for $(e>v)$ is $\operatorname{deg}(f)-2$. We have the lemma from the eqation \eqref{ricci}.
 \end{proof}

For a face $f$ we consider the next sum,
\begin{eqnarray}
\operatorname{Ric}(\omega)(f) :=\sum_{e;f>e} \operatorname{Ric}(\omega) (f>e),
\end{eqnarray}
where the sum is taken over the all edges $e$ contained in the boundary of the face $f$.

\begin{definition}
We define {\it the scalar curvature $S(f)$ at a face $f$} as
\begin{eqnarray}
S(f) = \operatorname{trace}  \operatorname{Ric}(\omega)(f) = \operatorname{deg}(f) (4- \operatorname{deg}(f)).
\end{eqnarray}

We define {\it the Gauss curvature at a face $f$} as
\begin{eqnarray}
g_f = \frac{S(f)}{\operatorname{deg}(f)} = 4- \operatorname{deg}(f). 
\end{eqnarray}
\end{definition}

If all weights of cells of $M$ are constants, we conclude the following lemma that is an analogue to the smooth surface.
\begin{lemma}
Let $M$ be a 2-dimensional quasiconvex cell complex that decomposes a 2-dimensional closed smooth surface. We assume that all weights of cells of $M$ are constants.
\begin{enumerate}
\item Let $v$ be a vertex of $M$. We take a combinatorial $1$-form $\omega$ on $M$ such that
\begin{eqnarray}
\sum_{e;e>v}( \omega^e_v)^2 =1,
\end{eqnarray}
where the sum is taken over all edges $e$ such that $e>v$. Then we have
\begin{eqnarray}
\sum_{e; e>v} \operatorname{Ric}(\omega) (e>v) = 4- \operatorname{deg}(v)=g_v.
\end{eqnarray}
\item 
Let $f$ be a face of $M$. We take a combinatorial $1$-form $\omega$ on $M$ such that
\begin{eqnarray}
\sum_{e;f>e}( \omega^f_e)^2 =1,
\end{eqnarray}
where the sum is taken over all edges $e$ such that $f>e$. Then we have
\begin{eqnarray}
\sum_{e; f>e} \operatorname{Ric}(\omega) (f>e) = 4- \operatorname{deg}(f)=g_f.
\end{eqnarray}
\end{enumerate}
\end{lemma}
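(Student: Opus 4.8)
The plan is to reduce the statement directly to the two lemmas already established, which give closed formulas for $\operatorname{Ric}(\omega)(e>v)$ when $e>v$ and for $\operatorname{Ric}(\omega)(f>e)$ when $f>e$, and then to use the hypothesis that all cell weights are equal in order to kill the residual 2-neighbor terms.

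For part (1), fix a vertex $v$ and let $e$ be an edge with $e>v$. By the earlier lemma,
\[
\operatorname{Ric}(\omega)(e>v) = (4-\operatorname{deg}(v))\left(\frac{w_v}{w_e}\right)^2 (\omega^e_v)^2 + \sum_{(f>e');\ {\rm 2-neighbor}}\left(\frac{(w_v)^2}{w_e w_{e'}} - \frac{w_v}{w_f}\right)\omega^e_v \omega^{e'}_v.
\]
When every cell carries the same weight $w$, one has $w_v/w_e = 1$ and $\frac{(w_v)^2}{w_e w_{e'}} - \frac{w_v}{w_f} = 1 - 1 = 0$, so every term of the 2-neighbor sum vanishes and the formula collapses to $\operatorname{Ric}(\omega)(e>v) = (4-\operatorname{deg}(v))(\omega^e_v)^2$. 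Summing over all edges $e$ incident to $v$ and invoking the normalization $\sum_{e;e>v}(\omega^e_v)^2 = 1$ gives
\[
\sum_{e;e>v}\operatorname{Ric}(\omega)(e>v) = (4-\operatorname{deg}(v))\sum_{e;e>v}(\omega^e_v)^2 = 4-\operatorname{deg}(v) = g_v,
\]
which is the first assertion.

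Part (2) is proved in exactly the same way, starting from the lemma for $\operatorname{Ric}(\omega)(f>e)$: with constant weights $w_e/w_f = 1$ and $\frac{w_e w_{e'}}{(w_f)^2} - \frac{w_v}{w_f} = 1 - 1 = 0$, so the 2-neighbor cross terms again disappear and $\operatorname{Ric}(\omega)(f>e) = (4-\operatorname{deg}(f))(\omega^f_e)^2$; summing over the edges $e$ in the boundary of $f$ and using $\sum_{e;f>e}(\omega^f_e)^2 = 1$ yields $\sum_{e;f>e}\operatorname{Ric}(\omega)(f>e) = 4-\operatorname{deg}(f) = g_f$. The computation is short, and the only points needing attention are confirming that the weight coefficients in front of the 2-neighbor sums really are differences of the shape $1-1$ once the weights are equated (so those sums vanish identically rather than merely simplify) and checking that the normalization hypothesis is indexed by exactly the components $\omega^e_v$, resp.\ $\omega^f_e$, appearing in the curvature formula so that the scalar factor $4-\operatorname{deg}(v)$, resp.\ $4-\operatorname{deg}(f)$, can be pulled out of the sum. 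Neither presents a genuine obstacle; the content of the lemma is carried entirely by the two preceding lemmas together with the constant-weight assumption.
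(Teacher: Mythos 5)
Your proposal is correct and follows exactly the route the paper intends: the paper states this lemma without a written proof, presenting it as an immediate consequence of the two preceding lemmas, and your computation (constant weights force $\left(\tfrac{w_v}{w_e}\right)^2=1$ and make each 2-neighbor coefficient equal to $1-1=0$, after which the normalization $\sum_{e;e>v}(\omega^e_v)^2=1$, resp.\ $\sum_{e;f>e}(\omega^f_e)^2=1$, pulls out the factor $4-\operatorname{deg}(v)$, resp.\ $4-\operatorname{deg}(f)$) is precisely the verification being left to the reader. Nothing further is needed.
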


\begin{proof}[Proof of Theorem \ref{main2}.]
We denote $V,E$ and $F$ by the numbers of vertexes, edges and faces in $M$ respectivity.\\
From the definition of the Gauss curvature, we have
\begin{eqnarray}
\sum_v g_v +\sum_f g_f &=& \sum_v (4- \operatorname{deg}(v)) +\sum_f (4- \operatorname{deg}(f)) \\
                &=& 4  V - \sum_v \operatorname{deg}(v)  +4 F -    \sum_f  \operatorname{deg}(f)    \nonumber \\
                &=&  4  V - 2  E + 4F -2 E \nonumber \\
                &=& 4(V-E+F)            \nonumber \\
                &=& 4\chi(M).\nonumber
\end{eqnarray}

\end{proof}

 \begin{bibdiv}
 \begin{biblist}

\bib{forman-bochner}{article}{
   author={Forman, Robin},
   title={Bochner's method for cell complexes and combinatorial Ricci
   curvature},
   journal={Discrete Comput. Geom.},
   volume={29},
   date={2003},
   number={3},
   pages={323--374},
   issn={0179-5376},
   review={\MR{1961004}},
   doi={10.1007/s00454-002-0743-x},
}

\bib{forman-novikov}{article}{
   author={Forman, Robin},
   title={Combinatorial Novikov-Morse theory},
   journal={Internat. J. Math.},
   volume={13},
   date={2002},
   number={4},
   pages={333--368},
   issn={0129-167X},
   review={\MR{1911862}},
   doi={10.1142/S0129167X02001265},
}

\bib{MR1644432}{article}{
   author={Forman, Robin},
   title={Combinatorial vector fields and dynamical systems},
   journal={Math. Z.},
   volume={228},
   date={1998},
   number={4},
   pages={629--681},
   issn={0025-5874},
   review={\MR{1644432}},
   doi={10.1007/PL00004638},
}

\bib{forman-morse}{article}{
   author={Forman, Robin},
   title={Morse theory for cell complexes},
   journal={Adv. Math.},
   volume={134},
   date={1998},
   number={1},
   pages={90--145},
   issn={0001-8708},
   review={\MR{1612391}},
   doi={10.1006/aima.1997.1650},
}
		
\bib{arnold2012discrete}{article}{
  title={The Discrete Hodge Star Operator and Poincar{\'e} Duality},
  author={Arnold, Rachel F},
  year={2012},
  journal={Virginia Polytechnic Institute and State University}
}

\bib{lundel}{book}{
  title={The Topology of CW Complexes},
  author={Lundell, A.T.}
  author={Weingram, S.},
  isbn={9781468462548},
  series={The university series in higher mathematics},
  url={https://books.google.co.jp/books?id=7FXtBwAAQBAJ},
  year={2012},
  publisher={Springer New York}
}

\bib{paul}{article}{
  title={Combinatorial Curvature of Cellular Complexes},
  author={Mccorollarymick, Paul},
  journal={The University of Melbourne, Department of Mathematics and Statistics},
  year={2004}
}

 \end{biblist}
 
 \end{bibdiv}
 
 \bigskip
\address{Kazuyoshi Watanabe\\
Mathematical Institute \\
Tohoku University \\
Sendai 980-8578 \\
Japan
}
{kazuyoshi.watanabe.q5@dc.tohoku.ac.jp
}

\end{document}